\newcommand{\R}{\mathbb{R}}
\newcommand{\C}{\mathbb{C}}
\newcommand{\Z}{\mathbb{Z}}
\newcommand{\h}{\mathbb{H}}
\newcommand{\Spin}{\mathrm{Spin}}
\newcommand{\abs}[1]{\lvert #1 \rvert}
\DeclareMathOperator{\Span}{Span}
\DeclareMathOperator{\Cl}{Cl}
\DeclareMathOperator{\CCl}{\mathbb{C}l}
\DeclareMathOperator{\Ric}{Ric}
\DeclareMathOperator{\tr}{tr}
\DeclareMathOperator{\id}{id}
\DeclareMathOperator{\scal}{scal}
\title{Killing spinors and hypersurfaces}
\author{Diego Conti and Romeo Segnan Dalmasso}
\newtheorem{theorem}{Theorem}[section]
\newtheorem{lemma}[theorem]{Lemma}
\newtheorem{corollary}[theorem]{Corollary}
\newtheorem{proposition}[theorem]{Proposition}
\theoremstyle{definition}
\newtheorem{example}[theorem]{Example}
\theoremstyle{remark}
\newtheorem{remark}{Remark}[section]
\begin{document}

\title{Killing spinors and hypersurfaces}
\author{Diego Conti and Romeo Segnan Dalmasso}

\maketitle
\begin{abstract}
We consider spin manifolds with an Einstein metric, either Riemannian or indefinite, for which there exists a Killing spinor. We describe the intrinsic geometry of nondegenerate hypersurfaces in terms of a PDE satisfied by a pair of induced spinors, akin to the generalized Killing spinor equation.

Conversely, we prove an embedding result for real analytic pseudo-Riemannian manifolds carrying a pair of spinors satisfying this condition.
\end{abstract}

\renewcommand{\thefootnote}{\fnsymbol{footnote}}
\footnotetext{\emph{MSC class 2020}: \emph{Primary} 53C25; \emph{Secondary} 53B30, 53C27, 53C50, 58J60}
\footnotetext{\emph{Keywords}: Killing spinor, Einstein metric, Cauchy problem, pseudo-Riemannian metric}
\renewcommand{\thefootnote}{\arabic{footnote}}

\section{Introduction} 
A spinor $\Psi$ on a pseudo-Riemannian spin manifold $(Z,h)$ is Killing if it satisfies
\begin{equation}
\label{eqn:killing}
\nabla_X\Psi = \lambda X\cdot\Psi,
\end{equation}
where $\lambda$ is a complex constant and $\cdot$ denotes Clifford multiplication; this includes the case $\lambda=0$, when the the spinor is parallel. Unless $\Psi$ is identically zero, the condition \eqref{eqn:killing} forces the scalar curvature to be $4n(n-1)\lambda^2$. This puts strong constraints on the geometry, and forces $\lambda$ to be either real or purely imaginary.

Riemannian and pseudo-Riemannian manifolds with a Killing spinor have been studied in general relativity  since \cite{Walker1970OnSpacetimes}, and later on in supersymmetry (see \cite{Duff1986Kaluza-KleinSupergravity}). Mathematically, Killing spinors on a compact Riemannian manifold with positive curvature correspond to eigenvectors for the Dirac operator which attain the lowest possible eigenvalue (see \cite{Friedrich1980DerSkalarkrummung}); in both the Riemannian and indefinite case, Killing and parallel spinors are studied in connection with holonomy (see \cite{Wang1989ParallelForms,Bar1993RealHolonomy,Baum2014OnManifolds}). 

The geometry of Riemannian manifolds with a Killing spinor is quite rigid (see \cite{Bar1993RealHolonomy,Baum1989CompleteSpinors}). In particular, the metric is Einstein, and Ricci-flat when $\lambda=0$. This is not always true for indefinite metrics. For instance, \cite{Bohle2003KillingSO} shows that there exist non-Einstein Lorentzian manifolds admitting a Killing spinor with $\lambda$ imaginary; on the other hand, it is shown in the same paper that Lorentzian manifolds admitting Killing spinors with $\lambda$ real are necessarily Einstein. In this paper, we will focus on the case in which the metric is Einstein. More precisely, we study nondegenerate hypersufaces inside an Einstein pseudo-Riemannian manifold with a Killing spinor.

\smallskip
The case in which the spinor $\Psi$ is parallel has been considered in \cite{Bar2005}, showing that the restriction of $\Psi$ to the hypersurface satisfies
\begin{equation}
\label{eqn:generalizedkilling}
\nabla_X\psi = \frac12 A(X)\cdot\psi,
\end{equation}
where $A$ is the Weingarten operator, under the assumption that the normal is space-like; one then says that $\psi$ is a generalized Killing spinor. If the normal is time-like, one obtains the similar equation
\begin{equation}
\label{eqn:imaginarygeneralizedkilling}
\nabla_X\psi = \frac i2 A(X)\cdot\psi;
\end{equation}
then $\psi$ is called a generalized imaginary Killing spinor, or imaginary W-Killing spinor (see e.g. \cite{Baumleistnerlischewski}).

A natural problem to consider is whether this is a characterization, i.e. whether 
any pseudo-Riemannian manifold $(M,g)$  with a generalized (imaginary) Killing spinor $\psi$ can be embedded isometrically as a hypersurface inside a manifold $(Z,h)$ with a parallel spinor extending $\psi$, with the symmetric tensor $A$ corresponding to the Weingarten operator. We will then refer to $(Z,h)$ as an extension of $(M,g)$.

Little is known for the case of general signature; the main result was proved in \cite{Bar2005}, where the extension is shown to exist in the case that  $\nabla A$ is totally symmetric.

For Riemannian extensions of Riemannian metrics, the classification of the holonomy groups of a manifold with a parallel spinor in \cite{Wang1989ParallelForms} allows one to recast the problem in terms of $G$-structures and differential forms. Extending the metric  amounts to solving appropriate evolution equations in the sense of \cite{Hitchin2001StableMetrics} (see also \cite{ContiSalamon, Conti2010Calabi-YauReduction}); for some instances of $G$, the existence of a solution can then be proved using the integrability of an exterior differential system associated to the $G$-structure (see \cite{ContiSalamon,Bryant2010NonEmbeddingHolonomy}). A general proof of the existence of the embedding for real analytic data, using spinors rather than differential forms,  was given in  \cite{Ammann2013TheCP}. We point out that there are examples of non-real-analytic Riemannian manifolds with a generalized Killing spinor which cannot be extended (see \cite{Bryant2010NonEmbeddingHolonomy} and \cite[Theorem~4.27]{Ammann2013TheCP}), indicating that the real analytic condition cannot be eschewed in this context.

For Lorentzian extensions of Riemannian metrics, a proof of  existence was given in \cite{Baumleistnerlischewski} for real analytic data, and \cite{Lischewski2015TheSystem} for smooth data, under the condition
\[U_\psi \cdot \psi =i u_\psi \psi,\]
with $U_\psi$ denoting the Riemannian Dirac current and $u_\psi$ its norm. This algebraic condition on the spinor corresponds to imposing that the parallel spinor on $Z$ is null. For the $4$-dimensional case, an alternative proof using the polyform associated to the square of the spinor was given in \cite{Murcia2022ParallelFour-manifolds}. In these results, the metric on $Z$ is not automatically Ricci-flat. In order to obtain a Ricci-flat metric, one needs to impose additional constraints involving the tensor $A$ and the scalar curvature $s$, namely
\begin{equation}
\label{eqn:ricciflatconstraints}
s=\tr A^2-(\tr A)^2, \quad d\tr A + \delta A=0.
\end{equation}
It was shown in \cite{Murcia2021ParallelHypersurfaces} that a Riemannian metric on a $3$-manifold with a generalized Killing spinor such that \eqref{eqn:ricciflatconstraints} holds can be extended to a Ricci-flat Lorentzian manifold with a parallel spinor.

\smallskip
To our knowledge, hypersurfaces inside a manifold $Z$ with a Killing spinor, $\lambda\neq0$ have only been studied in the Riemannian setting and with the language of differential forms. By \cite{Bar1993RealHolonomy}, $Z$ is either  Einstein-Sasaki, $3$-Einstein Sasaki, nearly-parallel $\mathrm{ G}_2$, nearly K\"ahler of dimension $6$ or a round sphere. The case where $Z$ has dimension three has been studied in \cite{Morel2005SurfacesSpinors}. Hypersurfaces inside a nearly-K\"ahler $6$-manifold have been studied in  \cite{Fernandez2008NearlySingularities}, where the corresponding evolution equations are also introduced. Also in this context, solving the evolution equations can be used effectively to produce explicit metrics; this approach has been instrumental in the construction of inhomogeneous nearly-K\"ahler manifolds in \cite{Foscolo2017New3-spheres}. For real analytic data, the existence of an extension for the geometries corresponding to nearly-K\"ahler, nearly-parallel $\mathrm G_2$ and Einstein-Sasaki structures on $Z$ has been proved in \cite{Conti2011EmbeddingTorsion}.

In this paper, we describe in spinorial terms the geometry of a hypersurface inside an Einstein manifold with a Killing spinor $\Psi$, generalizing \eqref{eqn:generalizedkilling} and \eqref{eqn:imaginarygeneralizedkilling} (Theorem~\ref{thm:restrictionSpace}, Theorem~\ref{thm:restrictionTime}). When the hypersurface has even dimension, the geometry of the hypersurface is described by a single equation involving the restricted spinor $\psi$, which originally appears in \cite{Morel2005SurfacesSpinors} in the context of surfaces.  In odd dimensions, a complete description requires two spinors. In the case that the orthogonal distribution to the hypersurface is space-like, generated by a unit normal $\nu$, the spinors $\psi$, $\phi$ correspond to the restrictions of $\Psi$ and $\nu\cdot\Psi$, and they satisfy
\begin{equation*}
    \begin{cases}
    \nabla_X^{ M}\psi=\frac{1}{2}A(X)\odot\psi+\lambda X\odot\phi,\\
    \nabla_X^{ M}\phi=\lambda X\odot\psi-\frac{1}{2}A(X)\odot\phi;
    \end{cases}
\end{equation*}
similar equations hold in the case that the orthogonal distribution  is time-like. The case of even dimension can be subsumed in this system by introducing a second spinor $\phi=i^{\frac{3p+q+2}2}\omega_M\odot\psi$, where $(p,q)$ is the signature and $\omega_M$ the volume form. In analogy to \eqref{eqn:ricciflatconstraints}, the fact that the ambient manifold is assumed to be Einstein implies $d\tr A+\delta A=0$. We dub the resulting geometry a \emph{harmful structure}, meaning to suggest the fact that such a structure potentially leads to a Killing spinor on the extension.

The main result of this paper is that any real analytic harmful structure can be extended to an Einstein manifold with a Killing spinor. The proof is akin to \cite{Ammann2013TheCP}; however, some work is needed to handle the more general signature. Indeed, the characterization of  real analytic hypersurfaces inside Riemannian Einstein manifolds given in \cite{Koiso1981HypersurfacesOE} extends to the pseudo-Riemannian case (Corollary~\ref{cor:embeddingSpace}, Corollary~\ref{cor:embeddingTime}). We prove that the spinors defining the harmful structure can be extended to Killing spinors on $Z$ by parallel translation relative to a suitable connection. This shows that any pseudo-Riemannian spin manifold with a harmful structure can be extended to an Einstein manifold with a Killing spinor (Theorem~\ref{thm:mainpseudoriemannian}). Our result generalizes the known results quoted above in two respects: we consider pseudo-Riemannian metrics, and we allow $\lambda\neq0$ in the Killing equation. By contrast, we restrict the geometry by imposing that the metric on $Z$ is Einstein.

Working with arbitrary signature forces us to restrict to the real analytic case. However, we note that the results of \cite{DeTurck1983TheCP} indicate that smooth Riemannian hypersurfaces inside a Lorentzian Einstein manifold have a characterization similar to  \cite{Koiso1981HypersurfacesOE}. This indicates that the harmful structure corresponding to this signature always extends to an Einstein Lorentzian manifold. Determining whether the spinor can be extended to obtain a Killing spinor will be the object of future work.

\section{Hypersurfaces in Einstein manifolds}
In this section we recall Koiso's characterization  of $n$-dimensional real analytic pseudo-Riemannian manifolds $(M,g)$ which can be immersed as hypersurfaces in an Einstein manifold (see \cite{Koiso1981HypersurfacesOE}). Whilst Koiso works in Riemannian signature, the proof works in the same way for arbitrary signature, though statements need to be adapted slightly.

The Einstein manifold will take the form of a generalized cylinder in the sense of \cite{Bar2005}, i.e. a product $Z=M\times(a,b)$ endowed with a metric of the form $g_t+dt^2$, with $\{g_t\}$ a one-parameter family of metrics on $M$. In calculations, we will often  drop the subscript $t$ for simplicity. To be precise, this description will be valid locally, i.e. around each point $x$ there will be an open neighbourhood $U$ such that $U\times(a,b)$ is contained in $Z$, and the metric can be written down as a generalized cylinder; however, the interval of definition may shrink to zero as the point $x$ moves, if $M$ is not compact.

The isometric embedding of $(M,g)$ in the generalized cylinder will be obtained by imposing the initial condition $g_0=g$. The Einstein condition is a PDE which can be expressed purely in terms of $\{g_t\}$; however, it will be convenient to write it in terms of both $\{g_t\}$ and the Weingarten operators $\{A_t\}$, with the convention that for $X,Y$ tangent to $M$ the normal component of $\nabla^Z_X Y$ is  $g_t(A_t(X),Y)\frac{\partial}{\partial t}$, so $A_t=-\nabla \frac{\partial}{\partial t}$.  Notice that setting $\dot g_t(X,Y)=\frac{d}{dt}(g_t(X,Y))$, one has
\begin{equation}
\label{eqn:WeingartenInGeneralizedCylinder}
\dot g_t(X,Y)=-2g_t(A_t(X),Y).
\end{equation}
We will also consider hypersurfaces with time-like normal direction, which locally take the form of generalized cylinders $g_t^2-dt^2$; in this case, $A_t=\nabla \frac{\partial}{\partial t}$ and \eqref{eqn:WeingartenInGeneralizedCylinder} has the opposite sign.

We will need to consider the operator $\delta$ acting on tensors of type $(k,h)$ as
\[\begin{cases}
(\delta T)(v_1,\dotsc, v_{k},\alpha^1,\dotsc, \alpha^{h-1}) =
-\sum_{i=1}^n(\nabla_{e_i} T)(v_1,\dotsc, v_{k},e^i,\alpha^1,\dotsc, \alpha^{h-1}),& h>0\\
(\delta T)(v_1,\dotsc, v_{k-1}) =
-\sum_{i=1}^n (\nabla_{e_i} T)((e^i)^\sharp,v_1,\dotsc, v_{k-1}),& h=0
\end{cases}
\]
Here, $\{e_i\}$ denotes any frame, and $\{e^i\}$ its dual frame. In general, we will often consider orthonormal frames $\{e_i\}$, so that the metric takes the form
\[\epsilon_1e^1\otimes e^1+\dots + \epsilon_n e^n\otimes e^n,\]
where $\epsilon_i=\pm 1$.

Notice that for vector fields, $\delta X=-\operatorname{Div} X$, and for $1$-forms $\delta \alpha=d^*\alpha$; in particular, $\Delta f=\delta(d f)$ for any function $f$.
\begin{theorem}[Koiso \cite{Koiso1981HypersurfacesOE}]\label{thm:generalKoiso}
Let $\{g_t\}$ and $\{A_t\}$ be real analytic one-parameter families of metrics (resp. symmetric (1,1) tensors) on $M$ defined on the interval $(a,b)$, satisfying
\[\begin{cases}
\dot g_t(X,Y)=-2g_t(A_t(X),Y)\\
\dot A=-\Ric(g_t) +(\tr A) A+ K\id
\end{cases}\]
Assume further that 
\begin{equation}
\label{eqn:initialcondition}
s=(n-1)K-\tr A_0^2+(\tr A_0)^2,\quad  d\tr A_0+
\delta A_0=0.
\end{equation}
Then $g_t+dt^2$ is an Einstein metric on $M\times(a,b)$ with Einstein constant $K$.
\end{theorem}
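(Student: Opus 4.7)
The strategy is to split the Einstein condition $\Ric(g_t+dt^2)=K(g_t+dt^2)$ on the generalized cylinder $Z=M\times(a,b)$ into three groups of components---tangential, mixed, and normal---and handle them separately: the evolution equation on $A$ will force the tangential part for every $t$, the conditions \eqref{eqn:initialcondition} will encode the remaining two at $t=0$, and the twice-contracted Bianchi identity will propagate them forward in $t$.

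First, I would use the Gauss--Codazzi--Mainardi equations together with the Riccati equation for $A$ along the normal flow to express the components of $\Ric^Z$ on a slice $M\times\{t\}$ purely in terms of $g_t$, $A_t$ and $\dot A_t$. A direct computation then shows that the first relation $\dot g_t(X,Y)=-2g_t(AX,Y)$ only identifies $A_t$ as the Weingarten operator of $M\times\{t\}$, while substituting $\dot A = -\Ric(g_t)+(\tr A)A + K\id$ into the tangential Ricci formula is precisely equivalent to $\Ric^Z(X,Y) = K g_t(X,Y)$ for every $X, Y$ tangent to $M$ and every $t\in(a,b)$.

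It remains to prove $\Ric^Z(\del_t,\del_t) = K$ and $\Ric^Z(X,\del_t)=0$. By the contracted Gauss identity and the Codazzi identity, these two conditions translate respectively into the vanishing of
\[
\mathcal{H}(t) = \scal(g_t)-(n-1)K+\tr(A_t^2)-(\tr A_t)^2, \qquad
\mathcal{C}(t) = d\tr A_t+\delta A_t,
\]
and the hypothesis \eqref{eqn:initialcondition} is exactly $\mathcal{H}(0)=0$, $\mathcal{C}(0)=0$. Hence the task reduces to showing that $\mathcal{H}$ and $\mathcal{C}$ vanish identically on $(a,b)$.

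The main step---and the expected obstacle---is to show that, once the tangential Einstein equation is known to hold for all $t$, the pair $(\mathcal{H},\mathcal{C})$ obeys a homogeneous linear system in $t$ with real analytic coefficients. This is the classical \emph{propagation of constraints} argument: applying the twice-contracted Bianchi identity $\nabla^i(\Ric^Z-\tfrac12\scal^Z h)_{ij}=0$ to the tensor $E = \Ric^Z-Kh$, whose only possibly nonzero components are $E(\del_t,\del_t)$ and $E(X,\del_t)$ (essentially $\mathcal{H}$ and $\mathcal{C}$), produces a first-order linear evolution system for $(\mathcal{H},\mathcal{C})$ with coefficients built from $g_t$, $A_t$ and their spatial derivatives. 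Carrying out this contraction cleanly in arbitrary signature is the computational core of the argument. Once in hand, analyticity of the data allows Cauchy--Kowalevski (or uniqueness for this linear analytic system) to force $\mathcal{H}\equiv 0$ and $\mathcal{C}\equiv 0$, completing the proof that $g_t+dt^2$ is Einstein with constant $K$.
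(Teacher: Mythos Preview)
Your plan is the standard Koiso argument and is correct: the evolution equation forces the tangential Einstein condition, the initial constraints \eqref{eqn:initialcondition} are exactly the vanishing of the normal and mixed components at $t=0$, and the twice-contracted Bianchi identity yields a first-order linear homogeneous system in $t$ for $(\mathcal{H},\mathcal{C})$, whence uniqueness for real analytic data propagates the constraints. Note, however, that the paper does not actually prove this theorem; it is stated as Koiso's result with the remark that ``the proof works in the same way for arbitrary signature,'' so there is no paper proof to compare against beyond confirming that your outline is precisely the argument being invoked.
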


Applying the Cauchy-Kovaleskaya theorem (see \cite{Ammann2013TheCP}), one obtains:
\begin{corollary}\label{cor:embeddingSpace}
A real analytic pseudo-Riemannian manifold $(M,g)$ of signature $(p,q)$ embeds isometrically as a hypersurface in an Einstein manifold of signature $(p+1,q)$ with $\Ric^Z=K\id$ if and only if it admits a symmetric $(1,1)$ tensor $A$ such that
\begin{equation}\label{eqn:scalarConditionEmbeddingSpace}
    s=(n-1)K-\tr A^2+(\tr A)^2,\quad d\tr A+\delta A=0.
\end{equation}
\end{corollary}

\begin{corollary}\label{cor:embeddingTime}
A real analytic pseudo-Riemannian manifold $(M,g)$ of signature $(p,q)$ embeds isometrically as a hypersurface in an Einstein manifold of signature $(p,q+1)$ with $\Ric^Z=K\id$ if and only if it admits a symmetric $(1,1)$ tensor $A$ such that
\begin{equation}\label{eqn:scalarConditionEmbeddingTime}
    s=(n-1)K+\tr A^2-(\tr A)^2,\quad d\tr A+\delta A=0.
\end{equation}
\end{corollary}
\begin{proof}
Let $\tilde g=-g$ be the opposite metric, with signature $(q,p)$. Then $\tilde\Ric=-\Ric$, $\tilde s=-s$, and $\tilde\delta A=\delta A$. Write also $\tilde A=-A$. Then 
\[\tilde s=(n-1)(-K)-\tr \tilde A^2+(\tr \tilde A)^2,\quad d\tr \tilde A+\delta \tilde A=0.\]
Therefore, we obtain a metric with $\Ric^Z=-K\id$, which locally takes the form of a generalized cylinder $\tilde g_t+dt^2$. If we reverse the sign of the metric, we find a metric locally of the form $g_t-dt^2$ satisfying $\Ric^Z=K\id$.
\end{proof}

\section{Killing spinors and hypersurfaces}
In this section we study the geometry of a  hypersurface embedded in a pseudo-Riemannian manifold with a (nonzero) Killing spinor. We show that the hypersurface inherits two spinors which satisfy a coupled differential system involving a symmetric tensor $A$, which corresponds to the second fundamental form.

Let $\Cl_{p,q}$ be the Clifford algebra of signature $(p,q)$, and let $\Sigma_{p,q}$ be the complex spinor representation. By definition, $\Sigma_{p,q}$ is a representation of $\Cl_{p,q}$; if $p+q$ is even and positive, $\Sigma_{p,q}$ splits into the sum of two representations of $\Spin_{p,q}$, denoted by $\Sigma_{p,q}^+$ and $\Sigma_{p,q}^-$, which can be identified as the $\pm1$-eigenspaces  of Clifford multiplication by the volume form when $p-q$ is a multiple of $4$, or the $\pm i$-eigenspaces if $p-q$ is not a multiple of $4$.

Let $N$ be a  spin manifold of dimension $n$ endowed with a pseudo-Riemannian metric of signature $(p,q)$, and let $\Sigma N$ denote the bundle of complex spinors; recall that $\Sigma N$ splits as $\Sigma_+ N\oplus\Sigma_- N$ when $n$ is even. Clifford multiplication gives a bundle map
\[TN\otimes \Sigma N\to \Sigma N, \quad v\otimes\psi\mapsto v\cdot\psi.\]
Let $e_1,\dots,e_n$ be a positively-oriented orthonormal basis of $TN$. Recall from {\cite[Proposition 3.3]{LawsonJr.1989SpinGeometry}} that the  volume element $\omega=e_1\dotsm e_{p+q}$ in $\Cl_{p,q}$, $p+q=n$ satisfies
\[\omega^2=(-1)^{\frac{n(n+1)}2+q}, \quad e_i\omega=(-1)^{n-1}\omega e_i.\]
In other words,
\[\omega^2=\begin{cases} 1 & p-q=0,3 \mod 4\\ -1 & p-q=1,2 \mod 4\end{cases}.\]
Conventionally, if $n$ is odd, $\omega$ acts on $\Sigma_{p,q}$ as multiplication by $i^{q+n(n+1)/2}$; if $n$ is even, it acts on each of $\Sigma_{p,q}^\pm$ as multiplication by $\pm i^{q+n(n+1)/2}$.

Now suppose $(Z,h)$ is a pseudo-Riemannian spin manifold with a Killing spinor $\Psi$, i.e. $\nabla_X^Z\Psi=\lambda X\cdot \Psi$ for any vector field $X$ of $Z$, where $\lambda$ is a complex constant. As there will not be any ambiguity, we will not use distinct symbols for the covariant derivative relative to the Levi-civita connection and the one relative to the spin connection. Since we are interested in hypersurfaces of $Z$, we will denote by $n+1$ the dimension of $Z$. As the volume element $\omega_Z$ is parallel, we have
\[\nabla_X^Z (\omega_Z\cdot\Psi)=\omega_Z\cdot \nabla_X^Z\Psi = \lambda \omega_Z\cdot X\cdot\Psi=(-1)^n\lambda X\cdot (\omega_Z\cdot \Psi).\]
Thus, $\omega_Z\cdot\Psi$ is also Killing. Assume $n$ is even. Then if $\lambda\neq0$, $\Psi$ and $\omega_Z\cdot\Psi$ are necessarily independent, since they have opposite Killing numbers. In general, we can decompose $\Psi$ as $\Psi_++\Psi_-$ and hence we obtain
\[\nabla_X^Z \Psi_+=\lambda X\cdot\Psi_-,\quad \nabla_X^Z \Psi_-=\lambda X\cdot\Psi_+.\]

Let $(M,g)$ be a nondegenerate oriented hypersurface, call $\iota\colon M\to Z$ the embedding, and let $\nu$ be a normal vector field, normalized so that $h(\nu,\nu)=1$ (or $h(\nu,\nu)=-1$). We have a bundle morphism from  the complex Clifford bundle $\CCl M$ to $\iota^* \CCl Z$,
\begin{equation}
\label{eqn:cliffordtoclifford}
v\mapsto \nu \cdot v  \quad(\text{resp. }v\mapsto i\nu\cdot v), \quad v\in TM.
\end{equation}
Recall that the Clifford algebra is graded over $\Z_2$ (see e.g. \cite{LawsonJr.1989SpinGeometry}); accordingly we have a splitting $\CCl Z=\CCl^0 Z\oplus\CCl^1 Z$. The bundle map  \eqref{eqn:cliffordtoclifford} is an isomorphism onto  $\iota^*\CCl^0 Z$; indeed, it restricts to an algebra isomorphism on each fibre, which denoting by $\odot$ the multiplication by a vector in $\CCl M$ and by $\cdot$ multiplication in $\CCl Z$ satisfies
\[v\odot w\mapsto \nu\cdot v\cdot w\quad(\text{resp. }v\odot w\mapsto i\nu\cdot v\cdot w).\]
Recalling that $n$ denotes the dimension of $M$, we obtain the identifications
\begin{equation}
    \label{eqn:identifications}
\begin{aligned}
\Sigma M&=\Sigma_+ M\oplus\Sigma_- M=\iota^*\Sigma Z, & n &\text{ even}, \\
\Sigma M&=\iota^*\Sigma_+ Z, & n &\text{ odd}.
\end{aligned}
\end{equation}
Through the identifications \eqref{eqn:identifications}, the covariant derivatives of a spinor $\Psi$ and its restriction $\psi$ are related by
\begin{equation}
  \label{eqn:inducedconnection}
  \nabla^{Z}_X\Psi=\nabla^{M}_X\psi-\frac{1}{2}\nu\cdot A(X)\cdot\psi,\quad X\in TM
\end{equation}
(see equation (3.5) in \cite{Bar2005}). If $\Psi$ is Killing, then
\[\nabla^{M}_X\Psi=\frac{1}{2}\nu\cdot A(X)\cdot\psi+\lambda X\cdot \Psi.
\]
This leads to an intrinsic formula for the covariant derivative of the restriction $\psi$ in terms of the geometry of the hypersuface $M$. We will first introduce this formula assuming $p+q$ even and $\nu$ space-like, noting that the special case of surfaces in three-dimensional Riemannian manifolds was treated in
\cite{Morel2005SurfacesSpinors}. If $p+q$ is even, the volume element $\omega_Z=e_1\cdot\ldots \cdot e_n\cdot\nu$ acts as
\[\omega_Z\cdot\Psi=i^{q+(n+1)(n+2)/2}\Psi=i^{q+n/2+1}\Psi,\]
so that
\[
\begin{split}
X\cdot\Psi &= \nu\cdot X\cdot \nu\cdot \Psi=i^{-q-n/2-1} \nu\cdot X\cdot\omega_Z\cdot \nu\cdot\Psi\\
&=i^{-q-n/2+1} \nu\cdot X\cdot e_1\cdot \ldots \cdot e_n\cdot\Psi\\
&=i^{-q-n/2+1}(-1)^{n(n-1)/2+n/2} \nu\cdot X\cdot (\nu\cdot e_1)\cdot \ldots \cdot (\nu\cdot e_n)\cdot
\Psi\\
&=i^{-q-n/2+1}X\odot \omega_M\odot\Psi
\end{split}
\]
Therefore, a Killing spinor on a manifold of signature $(p+1,q)$ with $p+q$ even induces on a  hypersurface of signature $(p,q)$ a spinor satisfying
\begin{equation}
\label{eqn:morel}
\nabla^{M}_X\psi = \frac12A(X)\odot\psi + \lambda i^{\frac{2-3q-p}{2}} X\odot \omega_M \odot \psi.
\end{equation}
For $p+q$ odd, we have the following:
\begin{theorem}\label{thm:restrictionSpace}
Let $Z$ be a pseudo-Riemannian spin manifold of dimension $n+1$ and signature $(p+1,q)$, with $n$ odd,  endowed with a Killing spinor $\Psi$ such that
\[\nabla^{Z}_X\Psi=\lambda X\cdot\Psi,\quad \lambda\in\C,\]
and let $M$ be an oriented hypersurface of signature $(p,q)$, with Weingarten operator $A(X)=-\nabla_X^{Z}\nu$. Write $\Psi=\Psi_++\Psi_-$, and define spinors $\psi$ and $\phi$ on $M$ by restricting $\Psi_+$ and $\nu\cdot\Psi_-$ and applying the isomorphism \eqref{eqn:identifications}. Then  $\phi$ and $\psi$ satisfy the coupled differential system
\begin{equation} \label{eqn:systemSpace}
    \begin{cases}
    \nabla_X^{M}\psi=\frac{1}{2}A(X)\odot\psi+\lambda X\odot\phi\\
    \nabla_X^{M}\phi=\lambda X\odot\psi-\frac{1}{2}A(X)\odot\phi
    \end{cases}
\end{equation}
and the restriction of $\Psi$ to $M$ is given by $\psi-\nu\cdot\phi$.
\end{theorem}
\begin{proof}
Using \eqref{eqn:inducedconnection}, \eqref{eqn:cliffordtoclifford}, $\nu\cdot\nu=-1$ and the fact that Clifford multiplication by a vector interchanges $\Sigma_+$ and $\Sigma_-$, we obtain
\[
\nabla^{ M}_X\psi=\lambda \nu\cdot X\cdot(\nu\cdot\Psi_-)+\frac{1}{2} A(X)\odot\psi=\lambda X\odot\phi+\frac{1}{2}A(X)\odot\psi.
\]
Similarly, we get
\[
\nabla^{ M}_X\phi-\frac{1}{2}\nu\cdot A(X)\cdot\phi=\nabla^{ Z}_X(\nu\cdot\Psi_-)=\nu\cdot\lambda X\cdot\Psi_+- A(X)\cdot\Psi_-.
\]
Thus
\[
    \nabla^{ M}_X\phi=\frac{1}{2}\nu\cdot A(X)\cdot\phi+\lambda X\odot\psi+ A(X)\cdot\nu\cdot \phi=\lambda X\odot\psi-\frac{1}{2}A(X)\odot\phi.
\]
\end{proof}
In the case that the normal is time-like, \eqref{eqn:inducedconnection} still holds, but \eqref{eqn:cliffordtoclifford} gives
 \[\nabla^{Z}_X\Psi=\nabla^{ M}_X\psi-\frac{1}{2}\nu\cdot A(X)\cdot\psi=\nabla^{ M}_X\psi+\frac{i}{2} A(X)\odot\psi;
 \]
with appropriate definitions and similar computations one obtains a similar system with a factor $-i$, i.e.
\begin{theorem}
\label{thm:restrictionTime}
Let $Z$ be a pseudo-Riemannian spin manifold of dimension $n+1$ and signature $(p,q+1)$, with $n$ odd, endowed with a Killing spinor $\Psi$, so that
\[\nabla^{ Z}_X\Psi=\lambda X\cdot\Psi,\quad \lambda\in\C,\]
and let $M$ be an oriented hypersurface of signature $(p,q)$ with Weingarten operator $A(X)=\nabla_X^{ Z}\nu$.

Write $\Psi=\Psi_++\Psi_-$, and define spinors $\psi$ and $\phi$ on $M$ by restricting $\Psi_+$ and $-\nu\cdot\Psi_-$ and applying the isomorphism \eqref{eqn:identifications}. Then $\phi$ and $\psi$ satisfy the coupled differential system
\begin{equation} \label{eqn:systemTime}
    \begin{cases}
    \nabla_X^{ M}\psi=-\frac{i}{2}A(X)\odot\psi-i\lambda X\odot\phi\\
    \nabla_X^{ M}\phi=i\lambda X\odot\psi+\frac{i}{2}A(X)\odot\phi,
    \end{cases}
\end{equation}
and the restriction of $\Psi$ to $M$ is given by $\psi-\nu\cdot\phi$.
\end{theorem}
\begin{remark}
\label{rem:evenharmful}
In the  even case, \eqref{eqn:systemSpace} and \eqref{eqn:systemTime} still hold if one sets  $\phi =i^{\frac{2-3q-p}{2}} \omega_M\odot\psi$, where $\omega_M$ is the volume form in $M$. In this case, $\phi$ is the restriction of $\nu\cdot\Psi$ (respectively $-\nu\cdot\Psi$) under the  isomorphism \eqref{eqn:identifications}. Notice that this is simply a different way of writing \eqref{eqn:morel} or its time-like analogue.
\end{remark}
Recall that the constant $\lambda$ appearing in the Killing spinor equation, and hence  equations \eqref{eqn:systemSpace} and \eqref{eqn:systemTime}, is either real or purely imaginary.

These equations characterize a geometry that gives rise to a Killing spinor in one dimension higher, but only potentially; in accord with their ``potentially killing nature'', we will dub them \emph{harmful} structures. More precisely, given a pseudo-Riemannian spin manifold $(M,g)$ of signature $(p,q)$, we will say  that a
\emph{weakly harmful structure} on $(M,g)$ is a pair of nowhere vanishing spinors $(\phi,\psi)$ satisfying either \eqref{eqn:systemSpace} or \eqref{eqn:systemTime} for some symmetric tensor $A$ and some constant $\lambda$, either real or purely imaginary; if $p+q$ is even, we further require that $\phi=i^{\frac{2-3q-p}{2}}\omega_M\odot\psi$,
where $\omega_M$ is the volume form. The weakly harmful structure will be called  \emph{real} if \eqref{eqn:systemSpace} holds and  \emph{imaginary} if \eqref{eqn:systemTime} holds.  If the symmetric tensor $A$ additionally satisfies
\[d\tr A + \delta A=0,\] 
$(\phi,\psi)$ will be called a \emph{harmful} structure.

\begin{remark}
We will see in Corollary~\ref{cor:weakHarmfulRiemann} that, on a Riemannian manifold, a real weakly harmful structure is necessarily harmful.
\end{remark}

Theorem~\ref{thm:restrictionSpace} and its time-like counterpart, Theorem~\ref{thm:restrictionTime}, show that any nondegenerate hypersurface inside an Einstein pseudo-Riemannian manifold $(Z,h)$ endowed with a Killing spinor inherits a harmful structure. If $(Z,h)$ is not assumed to be Einstein, one obtains a weakly harmful structure (see Corollary~\ref{cor:embeddingSpace} and Corollary~\ref{cor:embeddingTime}).

Notice that for $\lambda=0$, $\psi$ satisfies an equation analogous to the generalized Killing spinor equation of \cite{Bar2005}, with a factor $-i$ if one takes  the normal to be time-like, rather than space-like.

\section{Embedding into a pseudo-Riemannian Einstein spin manifold}\label{sec:embedding}
In this section we show that a real analytic pseudo-Riemannian spin manifold of signature $(p,q)$ with a harmful structure can be  embedded isometrically in a pseudo-Riemannian Einstein manifold, of signature $(p+1,q)$ or $(p,q+1)$ accordingly to whether the harmful structure is real or imaginary. We will give the detailed proofs only for real harmful structures, as the imaginary case is entirely similar. 

Since in this section all spinors are on the same manifold $M$,  we will omit the symbol $\odot$ and indicate Clifford multiplication by juxtaposition.

Given a harmful structure satisfying \eqref{eqn:systemSpace} or \eqref{eqn:systemTime}, we will denote by
\[F(X,Y)=(\nabla_XA)(Y)-(\nabla_YA)(X),\]
the exterior covariant derivative of $A$ (often denoted by $d^\nabla A$). We begin with the following:
\begin{lemma}
\label{lemma:curvature}
Let $(M,g)$ be a pseudo-Riemannian spin manifold with a real (weakly) harmful structure $(\phi,\psi)$, and let $X,Y\in TM$ be two vector fields. Then the curvature of the spinor bundle of $M$ satisfies
\begin{equation}\label{eqn:spinCurvatureSpace}
\begin{split}
    \mathcal{R}^M_{XY}\psi=&\frac{1}{2}\Big(F(X,Y)+A(Y)A(X)+g\big(A(Y),A(X)\big)\Big)\psi\\
    &+2\lambda^2\big(YX+g(X,Y)\big)\psi.
\end{split}
\end{equation}
\end{lemma}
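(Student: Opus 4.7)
The plan is a direct computation of the spin curvature $\mathcal{R}^M_{XY}\psi = \nabla_X\nabla_Y\psi - \nabla_Y\nabla_X\psi - \nabla_{[X,Y]}\psi$, repeatedly substituting the two equations of the real harmful structure for $\nabla\psi$ and $\nabla\phi$ whenever they appear. Since the stated identity is tensorial in $X,Y$, no geometric reformulation is needed.

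First I would apply $\nabla_X$ to $\nabla_Y\psi = \tfrac{1}{2}A(Y)\psi + \lambda Y\phi$ using the Leibniz rule, producing the five terms $\tfrac{1}{2}(\nabla_X A)(Y)\psi$, $\tfrac{1}{2}A(\nabla_X Y)\psi$, $\tfrac{1}{2}A(Y)\nabla_X\psi$, $\lambda(\nabla_X Y)\phi$, and $\lambda Y\nabla_X\phi$. Substituting the equations for $\nabla_X\psi$ and $\nabla_X\phi$ inside the last three terms produces an expression in $\psi$ and $\phi$ whose coefficients are built from $A$, $\lambda$, $X$, $Y$ and Clifford products thereof. Antisymmetrizing in $X,Y$ turns the $(\nabla_X A)(Y)$ difference into $F(X,Y)$, while the $A(\nabla_X Y - \nabla_Y X)\psi$ and $\lambda(\nabla_X Y - \nabla_Y X)\phi$ terms collapse to expressions involving $[X,Y]$ that are exactly cancelled by $\nabla_{[X,Y]}\psi = \tfrac{1}{2}A([X,Y])\psi + \lambda[X,Y]\phi$.

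At this point one is left with a $\psi$-coefficient of the form $\tfrac{1}{2}F(X,Y) + \tfrac{1}{4}\bigl(A(Y)A(X) - A(X)A(Y)\bigr) + \lambda^2(YX - XY)$ together with a $\phi$-coefficient $\tfrac{\lambda}{2}\bigl(A(Y)X - A(X)Y - YA(X) + XA(Y)\bigr)$. I would then apply the Clifford anticommutation $vw + wv = -2g(v,w)$, which holds for the induced product on $M$ (from the embedding $v\mapsto \nu\cdot v$ together with $\nu^2 = -1$), to rewrite $YX - XY = 2YX + 2g(X,Y)$ and $A(Y)A(X) - A(X)A(Y) = 2A(Y)A(X) + 2g(A(Y),A(X))$; this produces exactly the $\psi$-coefficient in the target formula.

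The main obstacle is verifying that the $\phi$-coefficient vanishes identically, since the stated formula involves only $\psi$ even though $\phi$ enters non-trivially through $\nabla\psi$. Regrouping as $\bigl(A(Y)X + XA(Y)\bigr) - \bigl(A(X)Y + YA(X)\bigr)$ and applying the Clifford relation converts this expression into $-2g(A(Y),X) + 2g(A(X),Y)$, which vanishes by the symmetry of $A$. This cancellation is a non-trivial interplay between the Clifford relation and the symmetry of the Weingarten-like tensor $A$, and is precisely what allows \eqref{eqn:spinCurvatureSpace} to be expressed without reference to $\phi$.
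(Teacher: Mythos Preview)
Your proposal is correct and follows essentially the same route as the paper: both compute $\mathcal{R}^M_{XY}\psi$ directly, substitute the harmful-structure equations for $\nabla\psi$ and $\nabla\phi$, use torsion-freeness to cancel the $[X,Y]$ terms, and then apply the Clifford relation $vw+wv=-2g(v,w)$ together with the symmetry of $A$ to kill the $\phi$-coefficient and rewrite the $\psi$-coefficient in the stated form. Your explicit identification of the $\phi$-cancellation as the one nontrivial point matches exactly the paper's emphasis.
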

\begin{proof}
We compute 
\[
\begin{split}
    \mathcal{R}^M_{XY}\psi=&\nabla_X\nabla_Y\psi-\nabla_Y\nabla_X\psi-\nabla_{[X,Y]}\psi\\
    \overset{\eqref{eqn:systemSpace}}{=}&\nabla_X\left(\frac{1}{2}A(Y)\psi+\lambda Y\phi\right)-\nabla_Y\left(\frac{1}{2}A(X)\psi+\lambda X\phi\right)\\
    &-\frac{1}{2}A([X,Y])\psi-\lambda[X,Y]\phi\\
    =&\frac{1}{2}(\nabla_X(A(Y))\,\psi+A(Y)\nabla_X\psi)+\lambda(\nabla_XY\phi+Y\nabla_X\phi)\\
    -&\frac{1}{2}(\nabla_Y(A(X))\psi+A(X)\nabla_Y\psi)-\lambda(\nabla_YX\phi+X\nabla_Y\phi)\\
    -&\frac{1}{2}A([X,Y])\psi-\lambda[X,Y]\phi\\
    \overset{\eqref{eqn:systemSpace}}{=}&\lambda T^\nabla(X,Y)\phi+\frac{1}{2}\Big((\nabla_XA)(Y)-(\nabla_YA)(X)+A\big( T^\nabla(X,Y)\big)\Big)\psi\\
    +&\frac{1}{2}\left[A(Y)\left(\frac{1}{2}A(X)\psi+\lambda X\phi\right)+\lambda Y\left(\lambda X\psi-\frac{1}{2}A(X)\phi\right)\right]\\
    -&\frac{1}{2}\left[A(X)\left(\frac{1}{2}A(Y)\psi+\lambda Y\phi\right)+\lambda X\left(\lambda Y\psi-\frac{1}{2}A(Y)\phi\right)\right],
    \end{split}
\]
where $T^\nabla=0$ is the torsion of the Levi Civita connection. We get
\[
\begin{split}
    \mathcal{R}^M_{XY}\psi=&\frac{\lambda}{2}\big(A(Y)X-YA(X)-A(X)Y+XA(Y)\big)\phi\\
    +&\Big(\frac{1}{2}F(X,Y)+\frac{1}{4}\big(A(Y)A(X)-A(X)A(Y)\big)+\lambda^2\big(YX-XY\big)\Big)\psi
\end{split}
\]
Since for the Clifford product $vw+wv=-2g(v,w)$ and $A$ is self-adjoint, the coefficient of $\phi$ equals zero and the statement follows.
\end{proof}
The following Lemma gives an expression for the Ricci tensor on a manifold for which the curvature satisfies \eqref{eqn:spinCurvatureSpace}.
\begin{lemma}
\label{lemma:ricci}
Assume that $(M,g)$ is an $n$-dimensional pseudo-Riemannian spin manifold with a real (weakly) harmful structure and fix an orthonormal frame $(e_1,\dots,e_n)$ for $TM$. Then the Ricci operator of $M$ satisfies 
\[
\begin{split}
    \Ric(X)\psi=&\Big(4(n-1)\lambda^2X+(\tr A)A(X)-A^2(X)\Big)\psi\\
    &+\left(\nabla_X(\tr A)+\sum_{k=1}^n\epsilon_ke_k(\nabla_{e_k}A)(X)\right)\psi.
\end{split}
\]
\end{lemma}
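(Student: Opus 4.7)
The approach is to take the curvature identity \eqref{eqn:spinCurvatureSpace} provided by Lemma~\ref{lemma:curvature}, specialize $Y = e_k$, Clifford-multiply on the left by $\epsilon_k\, e_k$, and sum over $k=1,\dots,n$. By the standard spinorial Ricci identity (valid in arbitrary signature)
\[
\sum_{k=1}^n \epsilon_k\, e_k \cdot \mathcal{R}^M_{e_k,X}\psi \;=\; \tfrac12\, \Ric(X)\cdot\psi,
\]
itself a consequence of the first Bianchi identity together with the Clifford relation $vw+wv = -2g(v,w)$, the left-hand side becomes the Ricci term one wants. The proof then reduces to simplifying, term by term, the contraction of the right-hand side of \eqref{eqn:spinCurvatureSpace}.

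The algebraic tool driving every simplification is the identity
\[
\sum_k \epsilon_k\, e_k \cdot B(e_k) \;=\; -\tr B,
\]
valid for any symmetric $(1,1)$ tensor $B$ (obtained by pairing off-diagonal contributions via $e_i e_j + e_j e_i = -2g(e_i,e_j)$), together with the frame expansion $V = \sum_k \epsilon_k\, g(V,e_k)\, e_k$ of an arbitrary vector. Applied to the symmetric tensor $\nabla_X A$, the second half of $F(e_k,X) = (\nabla_{e_k}A)(X)-(\nabla_X A)(e_k)$ produces $\tfrac12\, \nabla_X(\tr A)$, while the first half persists as the Clifford sum $\tfrac12 \sum_k \epsilon_k\, e_k\, (\nabla_{e_k}A)(X)$. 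For the quadratic piece $A(X)A(e_k)+g(A(X),A(e_k))$, one uses $e_k\, A(X) = -A(X)\, e_k - 2g(e_k,A(X))$ to move $e_k$ past $A(X)$, then invokes $\sum_k \epsilon_k\, e_k\, A(e_k) = -\tr A$ and the frame expansion on the residual vector; combined with the scalar contribution $g(A^2(X), e_k)$, this produces exactly $\tfrac12\big((\tr A)\, A(X) - A^2(X)\big)$. For the curvature term, the identities $\sum_k \epsilon_k\, e_k\, X\, e_k = (n-2)\, X$ and $\sum_k \epsilon_k\, g(e_k,X)\, e_k = X$ combine to give $2\lambda^2(n-1)\, X$.

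Summing the three contributions and multiplying through by $2$ to clear the factor in the spinorial Ricci identity yields exactly the advertised formula. The principal obstacle is bookkeeping: the signs $\epsilon_k$ arising from indefinite signature must be tracked consistently, and one must distinguish carefully when a sum of the form $\sum_k \epsilon_k\, e_k\,(\cdot)$ reassembles into Clifford multiplication by a vector, and when it collapses to the scalar $-\tr B$ of a symmetric endomorphism. Beyond Lemma~\ref{lemma:curvature} and these elementary Clifford identities, no further geometric input is required.
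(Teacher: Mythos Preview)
Your proposal is correct and follows essentially the same route as the paper: contract the curvature identity \eqref{eqn:spinCurvatureSpace} against $\epsilon_k e_k$, invoke the spinorial Ricci formula, and simplify each block using the trace identity $\sum_k \epsilon_k e_k\, W(e_k)=-\tr W$ for symmetric $W$ together with the frame expansion of a vector. One cosmetic slip: you say ``specialize $Y=e_k$'' in \eqref{eqn:spinCurvatureSpace}, but every subsequent expression you write --- $F(e_k,X)$, $A(X)A(e_k)$, and the identity $\sum_k \epsilon_k e_k X e_k=(n-2)X$ --- corresponds to setting the \emph{first} argument of $\mathcal{R}^M_{XY}$ equal to $e_k$; this is consistent with the form of the Ricci identity you quote and with the paper's computation, so the slip is purely in the labeling.
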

\begin{proof}
It is known, for example from equation (1.13) of \cite{bookb}, that
\begin{equation}\label{eqn:RicciAndCurvature2}
    \Ric(X)\psi=-2\sum_{k=1}^n\epsilon_ks_k\mathcal{R}_{Xs_k}\psi,
\end{equation}
which holds for any orthonormal basis $(s_1,\dots,s_n)$, where $\langle s_i,s_j\rangle=\epsilon_i\delta_{ij}$. Fix now an orthonormal frame $(e_1,\dots,e_n)$ for $TM$, so that $\langle e_i,e_j\rangle=g_{ij}=\epsilon_i\delta_{ij}$.
\[
\begin{split}
    \Ric(X)\psi=&-2\sum_{k=1}^n\epsilon_ke_k\bigg(2\lambda^2(e_kX+g(X,e_k))+\frac{1}{2}F(X,e_k)\\
    &+\frac{1}{2}\Big(A(e_k)A(X)+g\big(A(e_k),A(X)\big)\Big)\bigg)\psi\\
    =&\bigg(4\lambda^2(n-1)X-\sum_{k=1}^n\epsilon_ke_k\Big((\nabla_XA)(e_k)-(\nabla_{e_k}A)(X)\Big)\\
    &-\sum_{k=1}^n\epsilon_ke_k\Big(A(e_k)A(X)+g\big(A(e_k),A(X)\big)\Big)\bigg)\psi.
    \end{split}
\]
Recall that for any symmetric tensor $W$ we have
\begin{equation}
\label{eqn:trsymmetrictensor}   
\sum_{i=1}^n\epsilon_ie_i W(e_i)=-\tr(W).
\end{equation}
Then we get
\[
\begin{split}
    \Ric(X)\psi=&\Bigg(4(n-1)\lambda^2X-\sum_{k=1}^n\epsilon_ke_k\Big(A(e_k)A(X)+g\big(e_k,A^2(X)\big)\Big)\\
    &+\Big(\tr(\nabla_XA)+\sum_{k=1}^n\epsilon_ke_k(\nabla_{e_k}A)(X)\Big)\Bigg)\psi\\
    =&\bigg(4(n-1)\lambda^2X+(\tr A)A(X)-A^2(X)\\
    &+\Big(\nabla_X(\tr A)+\sum_{k=1}^n\epsilon_ke_k(\nabla_{e_k}A)(X)\big)\bigg)\psi.
\end{split}
\]
\end{proof}
The next lemma relates the scalar curvature of $M$ to the tensor $A$.
\begin{lemma}\label{lemma:scalarCurvatureSpace}
Let $(M,g)$ be  pseudo-Riemannian spin manifold endowed with a real (weakly) harmful structure $(\phi,\psi)$. Then 
\begin{equation}\label{eqn:scalarcurvatureSpace}
    \scal^g\psi=(4n(n-1)\lambda^2-\tr(A^2)+(\tr A)^2)\psi-2(d\tr A+\delta^gA)\psi.
\end{equation}
\end{lemma}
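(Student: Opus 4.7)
The plan is to deduce \eqref{eqn:scalarcurvatureSpace} from the Ricci identity of Lemma~\ref{lemma:ricci} by Clifford-contracting in the variable $X$. Since the Ricci operator is symmetric, identity \eqref{eqn:trsymmetrictensor} with $W=\Ric$ gives
\[\sum_i\epsilon_ie_i\Ric(e_i)\psi=-\scal^g\psi,\]
so it suffices to apply $\sum_i\epsilon_ie_i$ (by left Clifford multiplication) to the right-hand side of Lemma~\ref{lemma:ricci} with $X=e_i$ and simplify.

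The algebraic terms are immediate. Using $e_ie_i=-\epsilon_i$, one finds $\sum_i\epsilon_ie_i\,4(n-1)\lambda^2 e_i\psi=-4n(n-1)\lambda^2\psi$, while \eqref{eqn:trsymmetrictensor} applied to $A$ and to $A^2$ yields $-(\tr A)^2\psi$ and $\tr(A^2)\psi$ respectively. The scalar factor $\nabla_{e_i}(\tr A)=e_i(\tr A)$ contributes $\sum_i\epsilon_ie_i(\tr A)e_i\psi=d\tr A\,\psi$, where $d\tr A$ is viewed as a vector field via the musical isomorphism and acts by Clifford multiplication.

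The substantive step is the double sum
\[S=\sum_{i,k}\epsilon_i\epsilon_k\,e_ie_k(\nabla_{e_k}A)(e_i)\psi.\]
I would expand $(\nabla_{e_k}A)(e_i)=\sum_j\epsilon_j(\nabla_{e_k}A)_{ij}e_j$ with $(\nabla_{e_k}A)_{ij}:=g((\nabla_{e_k}A)(e_i),e_j)$, which is symmetric in $(i,j)$ because $A$ is, and split the triple sum over $i,j,k$ according to index coincidences. When $i=k$, the relation $e_ie_ie_j=-\epsilon_ie_j$ together with $\sum_i\epsilon_i(\nabla_{e_i}A)(e_i)=-(\delta A)^\sharp$ gives $(\delta A)^\sharp\psi$. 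When $j=i\neq k$, the trace identity $\sum_i\epsilon_i(\nabla_{e_k}A)_{ii}=e_k(\tr A)$ produces $d\tr A\,\psi$ together with a residual $-\sum_k(\nabla_{e_k}A)_{kk}e_k\psi$. When $j=k\neq i$, using the symmetry of $\nabla_{e_k}A$ to rewrite $(\nabla_{e_k}A)_{ik}=g(e_i,(\nabla_{e_k}A)(e_k))$ yields a second $(\delta A)^\sharp\psi$ together with the opposite residual $+\sum_k(\nabla_{e_k}A)_{kk}e_k\psi$, so the residuals cancel. Finally, the case of $i,j,k$ all distinct vanishes because $e_ie_ke_j$ changes sign under $i\leftrightarrow j$ while $(\nabla_{e_k}A)_{ij}$ is symmetric. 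In total $S=2(\delta A)\psi+d\tr A\,\psi$.

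Collecting the five contributions and solving for $\scal^g\psi$ recovers \eqref{eqn:scalarcurvatureSpace}. The main obstacle I foresee is the bookkeeping in the double sum $S$ and verifying the cancellation of the residual diagonal terms from the $j=i$ and $j=k$ cases; every other step is a direct application of \eqref{eqn:trsymmetrictensor} together with the basic Clifford identity $e_ie_i=-\epsilon_i$.
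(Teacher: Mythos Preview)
Your argument is correct and follows the paper's approach exactly: contract the Ricci identity of Lemma~\ref{lemma:ricci} against $\sum_i\epsilon_i e_i$, handle the algebraic terms via \eqref{eqn:trsymmetrictensor}, and then reduce the double sum $S$. The only difference is in how $S$ is evaluated. You expand $(\nabla_{e_k}A)(e_i)$ in the frame and do a four-case analysis on coincidences among $i,j,k$; the paper instead applies the Clifford relation $e_ie_k=-e_ke_i-2\langle e_i,e_k\rangle$ once, which immediately yields
\[
S=-\sum_{k}\epsilon_k e_k\Bigl(\sum_i\epsilon_i e_i(\nabla_{e_k}A)(e_i)\Bigr)\psi-2\sum_k\epsilon_k(\nabla_{e_k}A)(e_k)\psi
= d\tr A\,\psi+2\delta^g A\,\psi,
\]
the inner sum collapsing to $-\tr(\nabla_{e_k}A)$ by \eqref{eqn:trsymmetrictensor}. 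Both routes give the same answer; the paper's avoids the bookkeeping and the residual cancellation you flag as the main obstacle.
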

\begin{proof}
By \eqref{eqn:trsymmetrictensor}, we can write
\[
\begin{split}
    -\scal^g\psi=\sum_{j=1}^n\epsilon_je_j\Ric(e_j)\psi=\sum_{j=1}^n&\epsilon_je_j\bigg[4(n-1)\lambda^2e_j+(\tr A)A(e_j)-A^2(e_j)\\
    &+\Big(\nabla_{e_j}(\tr A)+\sum_{k=1}^n\epsilon_ke_k\,(\nabla_{e_k}A)(e_j)\Big)\bigg]\psi.
\end{split}
\]
All terms are straightforward to compute, except the last one that yields 
\[
\begin{split}
    \sum_{j=1}^n\epsilon_je_j\sum_{k=1}^n\epsilon_ke_k\,&(\nabla_{e_k}A)(e_j)= \sum_{j,k=1}^n\epsilon_j\epsilon_k(-e_k\,e_j-2\langle e_j,e_k\rangle)\,(\nabla_{e_k}A)(e_j)\\
    =&-\sum_{j,k=1}^n\epsilon_k\Big(e_k\,\big(\epsilon_je_j\,(\nabla_{e_k}A)(e_j)\big)+2\epsilon_j^2\delta_{jk}\,(\nabla_{e_k}A)(e_j)\Big)\\
    =&\sum_{k=1}^n \epsilon_ke_k\tr\big(\nabla_{e_k}A\big)-2\sum_{k=1}^n\epsilon_k(\nabla_{e_k}A)(e_k)\\
    =&d\tr A+2\delta^g A.
\end{split}
\]
Putting everything together we get
\[
\scal^g\psi=\big(4n(n-1)\lambda^2+(\tr A)^2-\tr(A^2)\big)\psi-2\big(d\tr A+\delta^gA\big)\psi.
\]
\end{proof}
As an immediate consequence we have the following
\begin{corollary}\label{cor:weakHarmfulRiemann}
On a Riemannian spin manifold, any real weakly harmful structure is harmful.
\end{corollary}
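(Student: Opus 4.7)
My plan is to read off the corollary directly from Lemma~\ref{lemma:scalarCurvatureSpace}, exploiting the positive definiteness of the Riemannian metric. First I would rearrange the scalar curvature identity \eqref{eqn:scalarcurvatureSpace} to isolate the $1$-form part, writing
\[
\alpha\cdot\psi = f\psi,\qquad \alpha:=d\tr A+\delta^g A,\qquad f:=\tfrac12\bigl(4n(n-1)\lambda^2-\tr A^2+(\tr A)^2-\scal^g\bigr).
\]
By the hypothesis that $\lambda$ is either real or purely imaginary one has $\lambda^2\in\R$, and $\scal^g$, $\tr A$, $\tr A^2$ are manifestly real, so $f$ is a real-valued function and $\alpha$ a real $1$-form.

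Next I would apply Clifford multiplication by $\alpha^\sharp$ to both sides. Using the convention $v\cdot v=-g(v,v)$ from the paper, in Riemannian signature one has $\alpha\cdot\alpha=-|\alpha|^2$, whence
\[
-|\alpha|^2\psi \;=\; \alpha\cdot(\alpha\cdot\psi) \;=\; f\,\alpha\cdot\psi \;=\; f^2\psi.
\]
Since $\psi$ is nowhere vanishing, this forces $f^2=-|\alpha|^2$ pointwise. The right-hand side is nonpositive and the left-hand side nonnegative, so both vanish; in particular $\alpha=d\tr A+\delta^g A\equiv0$, which is exactly the condition making the weakly harmful structure harmful.

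I do not anticipate a serious obstacle: the argument is really just the observation that Riemannian positivity rules out the $1$-form contribution in \eqref{eqn:scalarcurvatureSpace}. The only point worth noting is that the same approach breaks down in indefinite signature precisely because $|\alpha|^2$ can be negative, which is consistent with the corollary being restricted to the Riemannian case. As an alternative that avoids squaring, one could instead pair both sides of $\alpha\cdot\psi=f\psi$ with $\psi$ in the natural positive definite Hermitian product on $\Sigma M$: skew-Hermiticity of Clifford multiplication by real vectors makes $\langle\alpha\cdot\psi,\psi\rangle$ purely imaginary, while $f\lvert\psi\rvert^2$ is real, so $f=0$ and then $\alpha\cdot\psi=0$ forces $\alpha=0$.
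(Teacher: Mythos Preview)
Your proposal is correct and is essentially the same argument as the paper's: both rewrite \eqref{eqn:scalarcurvatureSpace} in the form $\alpha\cdot\psi=f\psi$, square via Clifford multiplication to obtain $f^2=-|\alpha|^2$, and use Riemannian positivity together with $\psi\neq0$ to force $f=0$ and $\alpha=0$. Your aside using skew-Hermiticity of Clifford multiplication is a valid alternative, but the paper does not take that route.
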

\begin{proof}
We write \eqref{eqn:scalarcurvatureSpace} as
\[
\big(\scal^g-4n(n-1)\lambda^2-(\tr A)^2+\tr(A^2)\big)\psi=-2(d\tr A+\delta^gA)\psi;
\]
this equation has the form  $f\psi=X\psi$, which implies that $f=X=0$ since
\[
f^2\psi=fX\psi=Xf\Psi=X^2\psi=-\abs{X}^2\psi,
\]
and $\psi$ is nowhere zero.
Thus 
\[
\scal^g=4n(n-1)\lambda^2-\tr(A^2)+(\tr A)^2,\quad d\tr A+\delta^gA=0.\]
\end{proof}
\begin{remark}
Notice that positive definiteness of $g$ is essential in the proof of Corollary~\ref{cor:weakHarmfulRiemann}, as otherwise the vanishing of $\abs{X}^2$ would not imply the vanishing of $X$. Notice also that  considering an imaginary weakly harmful structure rather than real one would make an  imaginary unit appear, invalidating the argument.
\end{remark}
Analogous results to Lemma~\ref{lemma:curvature}, Lemma~\ref{lemma:ricci} and Lemma~\ref{lemma:scalarCurvatureSpace} can be proved for imaginary harmful structures; the proofs are completeley analogous. We summarize these results in the following:
\begin{lemma}
Assume that $(M,g)$ is an $n$-dimensional pseudo-Riemannian spin manifold with an imaginary harmful structure. Then:
\begin{itemize}
    \item the curvature  satisfies
    \[
    \begin{split}
    \mathcal{R}^M_{XY}\psi=&\frac{1}{2}\Big(A(X)A(Y)+g\big(A(Y),A(X)\big)-iF(X,Y)\Big)\psi\\
    &+2\lambda^2(YX+g(X,Y))\psi;
    \end{split}
    \]
    \item the Ricci operator  satisfies
    \[
    \begin{split}
    \Ric(X)\psi=&\Big(4(n-1)\lambda^2X-(\tr A)A(X)+A^2(X)\Big)\psi\\
    &-i\left(\nabla_X(\tr A)+\sum_{k=1}^n\epsilon_ke_k(\nabla_{e_k}A)(X)\right)\psi;
    \end{split}
    \]
    \item\label{lemma:scalarCurvatureTime} the scalar curvature  satisfies
    \[
    \scal^g\psi=(4n(n-1)\lambda^2+\tr(A^2)-(\tr A)^2)\psi+2i(d\tr A+\delta^gA)\psi.
    \]
\end{itemize}
\end{lemma}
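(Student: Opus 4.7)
The plan is to retrace the proofs of Lemma~\ref{lemma:curvature}, Lemma~\ref{lemma:ricci}, and Lemma~\ref{lemma:scalarCurvatureSpace} almost verbatim, using \eqref{eqn:systemTime} in place of \eqref{eqn:systemSpace}. The structural difference is that the coefficient $\tfrac12$ multiplying $A(X)$ is replaced by $\pm\tfrac{i}2$ and, in \eqref{eqn:systemTime}, the spinor $\psi$ plays the role that $\phi$ had in \eqref{eqn:systemSpace}. Two numerical effects propagate through the calculations: first, $(i/2)(-i/2)=1/4$ changes the sign of the ``$A(X)A(Y)$ versus $A(Y)A(X)$'' ordering in the quadratic part of the curvature; second, wherever a single factor of $\tfrac12$ survives (the $F$-term, and the $\nabla A$ contribution to $\Ric$ and $\scal$), it gets promoted to $\pm\tfrac i2$, producing the imaginary units in the statement.

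First, I compute the curvature on $\psi$. Expanding
\[
\mathcal{R}^M_{XY}\psi=\nabla_X\nabla_Y\psi-\nabla_Y\nabla_X\psi-\nabla_{[X,Y]}\psi
\]
via \eqref{eqn:systemTime}, using the Leibniz rule and resubstituting $\nabla_X\psi$ and $\nabla_X\phi$, I collect the terms into a $\psi$-piece and a $\phi$-piece. Torsion-freeness of $\nabla$ eliminates the Lie-bracket contributions except through $F(X,Y)=(\nabla_XA)(Y)-(\nabla_YA)(X)$, which this time enters as $-\tfrac i2 F(X,Y)\psi$. The $\phi$-coefficient takes the form $\tfrac{i\lambda}{2}\bigl((YA(X)+A(X)Y)-(XA(Y)+A(Y)X)\bigr)$, and vanishes because $vw+wv=-2g(v,w)$ together with self-adjointness of $A$ makes $YA(X)+A(X)Y=-2g(A(X),Y)$ symmetric in $X,Y$. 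The $\psi$-coefficient collects as $\lambda^2(YX-XY)+\tfrac14\bigl(A(X)A(Y)-A(Y)A(X)\bigr)-\tfrac i2 F(X,Y)$, and rewriting commutators via $vw+wv=-2g(v,w)$ gives the stated formula.

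Next, the Ricci identity $\Ric(X)\psi=-2\sum_k\epsilon_k e_k\cdot \mathcal{R}^M_{Xe_k}\psi$ (equation (1.13) of \cite{bookb}) is applied to the curvature formula just obtained. The $\lambda^2$ piece produces $4(n-1)\lambda^2 X\psi$ exactly as in Lemma~\ref{lemma:ricci}. The quadratic $A$ piece becomes $\sum_k\epsilon_k e_k\cdot A(X)A(e_k)$ plus the scalar term, and using the anticommutation to move $A(X)$ past $e_k$, then applying \eqref{eqn:trsymmetrictensor} and the identity $\sum_k\epsilon_k g(A(e_k),A(X))e_k=A^2(X)$, yields $-(\tr A)A(X)+A^2(X)$ with opposite signs compared to the real case. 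The $-\tfrac i2 F$ term contributes $-i\bigl(\nabla_X\tr A+\sum_k\epsilon_k e_k(\nabla_{e_k}A)(X)\bigr)\psi$, since $\sum_k\epsilon_k e_k(\nabla_XA)(e_k)=-\tr(\nabla_XA)=-\nabla_X\tr A$.

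Finally, the scalar curvature is obtained from $-\scal^g\psi=\sum_j\epsilon_je_j\Ric(e_j)\psi$. The polynomial terms give $-4n(n-1)\lambda^2+(\tr A)^2-\tr(A^2)$ by the same manipulations as before. The $\nabla$-terms are handled exactly as in the proof of Lemma~\ref{lemma:scalarCurvatureSpace}: the double sum
\[
\sum_{j,k}\epsilon_j\epsilon_k e_je_k(\nabla_{e_k}A)(e_j)
\]
splits, via $e_je_k=-e_ke_j-2\epsilon_j\delta_{jk}$, into $d\tr A+2\delta^gA$, and the $\nabla_{e_j}\tr A$ contribution adds another $d\tr A$. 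The overall factor $-i$ picked up from Ricci then gives $-2i(d\tr A+\delta^gA)\psi$ on the $-\scal^g\psi$ side, i.e.\ $+2i(d\tr A+\delta^gA)\cdot\psi$ in the stated identity.

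There is no real obstacle; the only delicate point is bookkeeping of signs and imaginary units, in particular keeping track that the two factors of $\pm\tfrac i2$ combine to $-\tfrac14$ (flipping the ordering of the quadratic $A$ term relative to the real case), while a single surviving factor of $i$ produces the imaginary contributions in the $F$, Ricci, and scalar curvature terms.
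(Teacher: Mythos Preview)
Your approach is exactly what the paper does: it states that the proofs are ``completely analogous'' to Lemmas~\ref{lemma:curvature}--\ref{lemma:scalarCurvatureSpace}, and you have correctly traced through those computations with \eqref{eqn:systemTime} in place of \eqref{eqn:systemSpace}. One small slip in your heuristic explanation: the relevant product is $(-i/2)(-i/2)=-1/4$ (both factors on the $\psi$-line of \eqref{eqn:systemTime} carry $-i/2$), not $(i/2)(-i/2)=1/4$; this is what actually flips the sign relative to the real case and yields your (correct) expression $\tfrac14\bigl(A(X)A(Y)-A(Y)A(X)\bigr)$.
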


We can now prove the main result of this section. It can be viewed as a generalization of a result of \cite{Ammann2013TheCP} for generalized Killing spinors in Riemannian manifolds; our results differs in that it allows nonzero $\lambda$, though the proof is similar.
\begin{proposition}\label{prop:extensionMetricPseudo}
Let $(M,g)$ be a real analytic pseudo-Riemannian spin manifold of dimension $n$ and signature $(p,q)$ with a real (resp. imaginary) harmful structure $(\phi,\psi)$. Then $(M,g)$ can be  embedded isometrically in a pseudo-Riemannian Einstein manifold $(Z,h)$ of signature $(p+1,q)$ (resp. $(p,q+1))$, with constant scalar curvature $4n(n+1)\lambda^2$.
\end{proposition}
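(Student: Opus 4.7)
The plan is to apply Koiso's embedding Corollary~\ref{cor:embeddingSpace} (in the real case) or Corollary~\ref{cor:embeddingTime} (in the imaginary case) directly to the symmetric tensor $A$ that comes with the \harmful structure, with Einstein constant $K=4n\lambda^2$. The differential part $d\tr A+\delta A=0$ of Koiso's hypothesis is built into the definition of \harmful structure, so the only nontrivial check is the scalar identity \eqref{eqn:scalarConditionEmbeddingSpace} or \eqref{eqn:scalarConditionEmbeddingTime}. Note that at this stage we are only producing the Einstein ambient manifold $(Z,h)$; the extension of $(\phi,\psi)$ to a Killing spinor on $Z$ is a separate issue, to be addressed subsequently.

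In the real case, I would invoke Lemma~\ref{lemma:scalarCurvatureSpace}: the \harmful hypothesis makes the Clifford term $-2(d\tr A+\delta^g A)\cdot\psi$ in \eqref{eqn:scalarcurvatureSpace} vanish, leaving
\[\scal^g\,\psi=\bigl(4n(n-1)\lambda^2-\tr(A^2)+(\tr A)^2\bigr)\psi.\]
Since the right hand side acts on $\psi$ as scalar multiplication by a function and $\psi$ is nowhere zero, this yields the pointwise identity $\scal^g=4n(n-1)\lambda^2-\tr(A^2)+(\tr A)^2$. With $K=4n\lambda^2$, this is precisely \eqref{eqn:scalarConditionEmbeddingSpace}, so Corollary~\ref{cor:embeddingSpace} produces an isometric embedding of $(M,g)$ into an Einstein manifold $(Z,h)$ of signature $(r+1,s)$ with $\Ric^Z=K\,\id$, hence scalar curvature $(n+1)K=4n(n+1)\lambda^2$. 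By construction (via Theorem~\ref{thm:generalKoiso} and Cauchy--Kovalevskaya) the Weingarten operator of $M\times\{0\}$ inside $Z$ is the prescribed $A$, which will be essential when extending the spinor in a later step.

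For the imaginary case I follow the same path, now using the scalar curvature statement in the unnumbered lemma collecting the imaginary analogues. The term $+2i(d\tr A+\delta^g A)\cdot\psi$ is again killed by the \harmful condition, and the resulting identity $\scal^g=4n(n-1)\lambda^2+\tr(A^2)-(\tr A)^2$ matches \eqref{eqn:scalarConditionEmbeddingTime} for $K=4n\lambda^2$ (here $\lambda\in i\R$, so $K\le 0$, which is allowed). Corollary~\ref{cor:embeddingTime} then delivers the Einstein extension of signature $(r,s+1)$, again with scalar curvature $4n(n+1)\lambda^2$.

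There is no serious obstacle here: unlike in Corollary~\ref{cor:weakHarmfulRiemann}, I do not need any positive-definiteness trick to separate a scalar equation from a Clifford one-form equation, because the Clifford one-form $d\tr A+\delta A$ is zero by hypothesis. The entire proof is a bookkeeping verification that the \harmful scalar identity coincides with Koiso's trace identity for the universal choice $K=4n\lambda^2$, a matching that is forced by the known fact that a manifold carrying a Killing spinor with constant $\lambda$ has scalar curvature $4\dim(\cdot)(\dim(\cdot)-1)\lambda^2$.
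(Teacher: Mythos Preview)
Your proposal is correct and follows exactly the same approach as the paper: invoke Lemma~\ref{lemma:scalarCurvatureSpace} (resp.\ its imaginary analogue) to turn the \harmful hypothesis into the scalar constraint \eqref{eqn:scalarConditionEmbeddingSpace} (resp.\ \eqref{eqn:scalarConditionEmbeddingTime}) with $K=4n\lambda^2$, then apply Corollary~\ref{cor:embeddingSpace} (resp.\ Corollary~\ref{cor:embeddingTime}). One small aside: the adjective ``imaginary'' in ``imaginary \harmful structure'' refers to which system \eqref{eqn:systemTime} is satisfied, not to $\lambda$; the constant $\lambda$ may be real or purely imaginary in either case, so your parenthetical ``$\lambda\in i\R$'' is not warranted, though it plays no role in the argument.
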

\begin{proof}
It is sufficient to apply Corollaries \ref{cor:embeddingSpace} or \ref{cor:embeddingTime} appropriately; $d\tr A+\delta^gA$ is zero  by assumption, and the scalar curvature satisfies \eqref{eqn:scalarConditionEmbeddingSpace} or \eqref{eqn:scalarConditionEmbeddingTime} thanks to  Lemma~\ref{lemma:scalarCurvatureSpace} and Lemma~\ref{lemma:scalarCurvatureTime}.
\end{proof}

\section{Extension of the spinor}
\label{sec:extendspinors}
In this section we improve the results of Section~\ref{sec:embedding}, showing that the spinors defining the harmful structure actually extend to Killing spinors on $(Z,h)$.

As the arguments for the real and imaginary case are quite similar, we will give the complete proof of only the first one, hence the unit normal $\nu$ satisfies $h(\nu,\nu)=1$. Throughout this section, we fix a real analytic pseudo-riemannian manifold $(M,g)$ with a real harmful structure $(\phi,\psi)$, and we embed $M$ into an Einstein manifold $(Z,h)$. We will extend $\nu$ to a vector field in a neighbourhood of $M$ in $Z$ by taking the tangent direction to geodesics normal to $M$. Following~\cite{Ammann2013TheCP}, we exploit the fact that a spinor on $Z$ is Killing if and only if is parallel relative to the modified connection
\[
\Tilde{\nabla}_X\Phi=\nabla^Z_X\Phi-\lambda X\cdot\Phi.
\]
Now define a spinor $\Psi$ on $Z$ by parallel transport of $\psi$ ($n$ even) or $\psi-\nu\cdot\phi$ ($n$ odd) relative to $\Tilde{\nabla}$ along the geodesics tangent to $\nu$. Clearly, since $\tilde{\nabla}_\nu\Psi=0$, we get that
\begin{equation}\label{eqn:killingnu}
    \nabla^Z_\nu\Psi=\lambda\nu\cdot\Psi.
\end{equation}
Hence we extended the spinor and we proved that it satisfies the Killing equation for $\nu$ at least. The next part is not as trivial. We start by computing $\tilde{\nabla}_X\Psi_{|(M,0)}$, i.e. the restriction of $\tilde{\nabla}_X\Psi$ to $M$ seen as a hypersurface embedded in $Z$. We need to consider the even and odd case separately: the former gives
\[
\begin{split}
    \tilde{\nabla}_X\Psi_{|(M,0)}&=\nabla_X^Z\Psi_{|(M,0)}-\lambda X\cdot\Psi=\nabla_X^M\psi-\frac{1}{2}\nu\cdot A(X)\cdot\Psi-\lambda X\cdot\Psi\\
    &=\lambda X\odot\phi+\frac{1}{2}A(X)\odot\psi-\frac{1}{2} A(X)\odot\psi-\lambda X\cdot\Psi\\
    &=\lambda\nu\cdot X\cdot(\nu\cdot\psi)-\lambda X\cdot\Psi_{|(M,0)}=\lambda X\cdot\psi-\lambda X\cdot\psi=0,
\end{split}
\]
while the latter is 
\[
\begin{split}
    \tilde{\nabla}_X\Psi_{|(M,0)}=&\nabla_X^Z\Psi_{|(M,0)}-\lambda X\cdot\Psi=\nabla_X^M(\psi-\nu\cdot\phi)-\frac{1}{2}\nu\cdot A(X)\cdot\Psi-\lambda X\cdot\Psi\\
    =&\frac12A(X)\odot\psi+\lambda X\odot\phi-\nabla_X^M\nu\odot\phi-\nu\cdot\left(\lambda X\odot\psi-\frac12A(X)\odot\phi\right)\\
    &-\frac12A(X)\odot(\psi-\nu\cdot\phi)-\lambda X\cdot(\psi-\nu\cdot\phi)=0.
\end{split}
\]
Thus, the restriction of $\Psi$ to $M$ is parallel with respect to this connection both in the even and in the odd case. Following \cite{Bar2005}, we prove that $\Psi$ is Killing by showing that $\tilde{\nabla}_X\Psi$ is zero for all vector fields $X$ on $Z$ obtained by extending a vector field on $M$ by parallel transport along $\nu$, meaning that $\nabla_\nu X=0$; this condition implies 
\[ [X,\nu]=\nabla^Z_X\nu=-A_t(X).\]
Throughout this section, the vector fields denoted by $X$ or $Y$ will be assumed to be of this type. 

In order to show that $\tilde\nabla_X\Psi$ vanishes on $Z$, it will be sufficient to prove
\begin{equation}\label{eqn:KillingQondition}
    \nabla^Z_\nu\tilde{\nabla}_X\Psi=0,
\end{equation}
as $\tilde\nabla_X\Psi$ is identically zero on $M$.

In the rest of the section we shall work on $Z$, and write for simplicity $\nabla$, $\mathcal{R}$ instead of $\nabla^Z$, $\mathcal{R}^Z$. Furthermore, we will omit the Clifford multiplication symbol $\cdot$, except in expressions such as
$\mathcal{R}_{XY} U\cdot \Psi$, which represents $(\mathcal{R}_{XY} U)\cdot \Psi$ rather than $\mathcal{R}_{XY} (U\cdot \Psi)$.
\begin{lemma}\label{lemma:conditionL}
Fix a spinor $\psi$ on $M$ and consider its extension $\Psi$ to $Z$ via $\tilde\nabla$-parallel transport along $\nu$. Then
\[
\nabla_\nu\tilde{\nabla}_X\Psi=\mathcal{R}_{\nu X}\Psi+2\lambda^2\nu X\Psi+\lambda\nu\tilde{\nabla}_X\Psi+\tilde{\nabla}_{A(X)}\Psi.
\]
\end{lemma}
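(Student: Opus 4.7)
The plan is to compute $\nabla_\nu\tilde\nabla_X\Psi$ directly by unwinding the definition of $\tilde\nabla$ and then reassembling the terms to match the right-hand side. Everything in sight is natural, so no clever trick is needed; the task is mainly careful bookkeeping.

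First I would use the definition $\tilde\nabla_X\Psi=\nabla_X\Psi-\lambda X\cdot\Psi$ to write
\[
\nabla_\nu\tilde\nabla_X\Psi=\nabla_\nu\nabla_X\Psi-\lambda(\nabla_\nu X)\cdot\Psi-\lambda X\cdot\nabla_\nu\Psi.
\]
Since $X$ is extended from $M$ by parallel transport along $\nu$, we have $\nabla_\nu X=0$, killing the middle term. The hypothesis $\tilde\nabla_\nu\Psi=0$ gives $\nabla_\nu\Psi=\lambda\nu\cdot\Psi$, so the last term becomes $-\lambda^2 X\nu\Psi$.

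Next I would commute $\nabla_\nu\nabla_X$ using the spinor curvature identity $\nabla_\nu\nabla_X=\mathcal R_{\nu X}+\nabla_X\nabla_\nu+\nabla_{[\nu,X]}$. Using $[\nu,X]=-[X,\nu]=A(X)$ and $\nabla_\nu\Psi=\lambda\nu\Psi$, together with $\nabla_X\nu=-A(X)$, I get
\[
\nabla_\nu\nabla_X\Psi=\mathcal R_{\nu X}\Psi-\lambda A(X)\Psi+\lambda\nu\nabla_X\Psi+\nabla_{A(X)}\Psi.
\]
Combining with the previous display,
\[
\nabla_\nu\tilde\nabla_X\Psi=\mathcal R_{\nu X}\Psi-\lambda A(X)\Psi+\lambda\nu\nabla_X\Psi+\nabla_{A(X)}\Psi-\lambda^2 X\nu\Psi.
\]

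Finally I would rewrite the last three nontrivial terms in terms of $\tilde\nabla$. Namely,
\[
\lambda\nu\tilde\nabla_X\Psi=\lambda\nu\nabla_X\Psi-\lambda^2\nu X\Psi,\qquad \tilde\nabla_{A(X)}\Psi=\nabla_{A(X)}\Psi-\lambda A(X)\Psi.
\]
Substituting these, the target formula reduces to the identity
\[
-\lambda^2 X\nu\Psi=2\lambda^2\nu X\Psi-\lambda^2\nu X\Psi,
\]
i.e.\ $\nu X+X\nu=0$. This holds because $X$ stays orthogonal to $\nu$ off $M$: from $\nabla_\nu\nu=0$ (the flow lines of $\nu$ are geodesics) and $\nabla_\nu X=0$, the function $h(X,\nu)$ is constant along the $\nu$-geodesics, and vanishes on $M$. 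The only mildly tricky step is recognizing that this orthogonality, rather than any special property of the spinor, is what makes the two $\lambda^2$ Clifford terms collapse; once that is noted the proof is a short calculation.
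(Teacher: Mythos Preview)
Your proof is correct and follows essentially the same route as the paper: expand $\nabla_\nu\tilde\nabla_X\Psi$, introduce the curvature via the commutator of $\nabla_\nu$ and $\nabla_X$, and regroup the terms into $\tilde\nabla_{A(X)}\Psi$ and $\lambda\nu\tilde\nabla_X\Psi$, using the Clifford identity $\nu X+X\nu=-2h(\nu,X)=0$. The paper organizes the computation slightly differently (it writes out $\nabla_X\tilde\nabla_\nu\Psi=0$ and subtracts) and uses the orthogonality $X\perp\nu$ tacitly, whereas you spell out why this persists off $M$; otherwise the arguments coincide.
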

\begin{proof}
We have
\[
\begin{split}
    \nabla_\nu\tilde{\nabla}_X\Psi&=\nabla_\nu\nabla_X\Psi-\lambda\big(\nabla_\nu X \Psi+X\nabla_\nu\Psi\big)=\nabla_\nu\nabla_X\Psi-\lambda^2X\nu\Psi\\
    0=\nabla_X\tilde{\nabla}_\nu\Psi&=\nabla_X\nabla_\nu\Psi-\lambda\big(\nabla_X \nu \Psi+\nu\nabla_X\Psi\big)=\nabla_X\nabla_\nu\Psi+\lambda\big(A(X)\Psi-\nu\nabla_X\Psi\big).
\end{split}
\]
Thus, subtracting the second one from the first we obtain
\[
\begin{split}
\nabla_\nu\tilde{\nabla}_X\Psi&=\mathcal{R}_{\nu X}\Psi+\nabla_{A(X)}\Psi-\lambda^2X\nu\Psi- \lambda\big(A(X)\Psi-\nu\nabla_X\Psi\big)\\
&=\mathcal{R}_{\nu X}\Psi+\lambda\big(\lambda\nu X\Psi+\nu\nabla_X\Psi\big)+\tilde{\nabla}_{A(X)}\Psi\\
&=\mathcal{R}_{\nu X}\Psi+2\lambda^2\nu X\Psi+\lambda\nu\tilde{\nabla}_X\Psi+\tilde{\nabla}_{A(X)}\Psi.
\end{split}
\]
\end{proof}
Recall that $(Z,h)$ is an Einstein manifold, that is $\Ric^Z=ch$, where $c=4n\lambda^2$, as seen in Section~\ref{sec:embedding}. 
Following \cite{Ammann2013TheCP}, we define  sections $L,P$ of $(\nu^\perp)^\ast\otimes\Sigma Z$ and a section $Q$ of $\bigwedge^2(\nu^\perp)^\ast\otimes\Sigma Z$ by
\begin{align*}
     P(X)=&\mathcal{R}_{\nu X}\Psi+2\lambda^2\nu X\Psi,& L(X)=&\tilde{\nabla}_X\Psi,&\\
    Q(X,Y)=&\mathcal{R}_{XY}\Psi+2\lambda^2\big(XY+\langle X,Y\rangle\big)\Psi,
\end{align*}
and note that by Lemma~\ref{lemma:conditionL}
\begin{equation}
\label{eqn:nablanuL}
    (\nabla_\nu L)(X)=\nabla_\nu\tilde{\nabla}_X\Psi=P(X)+\lambda\nu L(X)+L(A(X)).
\end{equation}
The strategy of \cite{Ammann2013TheCP} is to show that $L,P,Q$ satisfy a linear, homogeneous PDE; zero is a solution, so by uniqueness one deduces that $L$ vanishes identically. We will simplify a bit by observing that $P$ can be obtained from $Q$ by means of a contraction, so that the PDE can be expressed in terms of $L$ and $Q$ alone.

For the remainder of the section, the sum over repeated indices will be implied.
\begin{lemma}\label{lemma:contraction}
The sections $P$ and $Q$ are related by 
\[ P(X)=\nu\epsilon_j e_jQ(e_j,X).\]
\end{lemma}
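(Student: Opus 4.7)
The plan is to extract $P(X)$ from a contraction of $Q$ by invoking the Einstein condition on $(Z,h)$. First I would extend the orthonormal frame $(e_1,\dots,e_n)$ of $TM$ to an orthonormal frame $(\nu,e_1,\dots,e_n)$ of $TZ|_M$, with $h(\nu,\nu)=1$. Applying the standard Clifford--Ricci identity \eqref{eqn:RicciAndCurvature2} on $Z$, and using the antisymmetry of $\mathcal{R}^Z$ in its first two slots, yields
\[
\Ric^Z(X)\cdot\Psi=2\,\nu\cdot\mathcal{R}^Z_{\nu X}\Psi+2\sum_{j=1}^n\epsilon_j\,e_j\cdot\mathcal{R}^Z_{e_j X}\Psi.
\]

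By Proposition~\ref{prop:extensionMetricPseudo} the metric $h$ is Einstein with $\Ric^Z=4n\lambda^2\,\id$, so I may substitute $\Ric^Z(X)=4n\lambda^2 X$. Multiplying the resulting identity on the left by $\tfrac{1}{2}\nu$ and using $\nu\cdot\nu=-1$ gives
\[
\mathcal{R}^Z_{\nu X}\Psi=\nu\cdot\sum_{j=1}^n\epsilon_j\,e_j\cdot\mathcal{R}^Z_{e_j X}\Psi-2n\lambda^2\,\nu\cdot X\cdot\Psi,
\]
so that adding $2\lambda^2\,\nu\cdot X\cdot\Psi$ to both sides expresses $P(X)$ as
\[
P(X)=\nu\cdot\sum_{j=1}^n\epsilon_j\,e_j\cdot\mathcal{R}^Z_{e_j X}\Psi-2(n-1)\lambda^2\,\nu\cdot X\cdot\Psi.
\]

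The remaining step is to recognise this residual term as the contribution of the $\lambda^2$-part of $Q$ to the contraction. A direct computation using $e_j\cdot e_j=-\epsilon_j$ and $\sum_j\epsilon_j\langle e_j,X\rangle e_j=X$ gives
\[
\sum_{j=1}^n\epsilon_j\,e_j\cdot\bigl(e_j\cdot X+\langle e_j,X\rangle\bigr)\Psi=(-n+1)X\cdot\Psi,
\]
so $\nu\cdot\sum_j\epsilon_j\,e_j\cdot Q(e_j,X)$ coincides with $P(X)$. The argument is entirely algebraic; the only subtleties are sign bookkeeping from $\nu\cdot\nu=-1$ and from curvature antisymmetry, together with the observation that the $2\lambda^2$ correction built into $Q$ is exactly what converts the dimension-dependent remainder of the Einstein relation into the universal $2\lambda^2$ shift defining $P$. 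I expect no real obstacle beyond this bookkeeping.
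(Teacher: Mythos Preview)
Your argument is correct and is essentially the same as the paper's: both start from the Clifford--Ricci identity \eqref{eqn:RicciAndCurvature2} on $Z$ with the frame $(\nu,e_1,\dots,e_n)$, insert the Einstein constant $\Ric^Z=4n\lambda^2\id$, and then perform the same contraction bookkeeping to match the $\lambda^2$ correction in $Q$ with the one defining $P$. The only cosmetic difference is the order of substitution (the paper replaces $\mathcal{R}_{e_jX}\Psi$ by $Q(e_j,X)-2\lambda^2(\cdots)$ immediately, whereas you keep $\mathcal{R}^Z$ until the final step), which does not affect the logic.
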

\begin{proof}
Writing \eqref{eqn:RicciAndCurvature2} as $\Ric(Y)\psi=2(\epsilon_k e_k\cdot\mathcal{R}_{e_kY}+\nu \mathcal{R}_{\nu Y})\psi$,
we have
\[\begin{split}
\nu \mathcal{R}_{\nu Y}\psi &=\frac12\Ric(Y)\psi - \epsilon_j e_j \mathcal{R}_{e_jY}\psi\\
&=2\lambda^2nY\psi - \epsilon_je_jQ(e_j,Y)+2\lambda^2 \epsilon_je_j e_j Y\psi+2\lambda^2\epsilon_je_j\langle e_j,Y\rangle \psi\\
&=2\lambda^2Y\psi-\epsilon_je_j Q(e_j,Y),
\end{split}\]
and by multiplying by $\nu$ we get $P(X)=\nu\epsilon_j e_jQ(e_j,X)$.
\end{proof}
The previous lemma shows that $P$ is obtained from $Q$ by a contraction, so the right-hand side of \eqref{eqn:nablanuL} can be expressed in terms of $L$ and $Q$. The derivative of $Q$ along $\nu$ is given by the following:
\begin{proposition}\label{prop:conditionQ}
The section $Q$ satisfies 
\begin{equation}\label{eqn:equationQ}
    \nabla_\nu Q(X,Y)=\nu\epsilon_j e_j\big((\nabla_XQ)(e_j,Y)-(\nabla_YQ)(e_j,X)\big)+L_2(X,Y).
\end{equation}
where $L_2$ depends linearly on $L$ and $Q$.
\end{proposition}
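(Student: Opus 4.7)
The plan, following \cite{Ammann2013TheCP}, is to use the second Bianchi identity to trade the $\nu$-derivative of the Riemann tensor for tangential derivatives of $\mathcal{R}^Z_{\nu\bullet}$, and then apply Lemma~\ref{lemma:contraction} to recognize these tangential derivatives as tangential derivatives of $Q$. The residual terms, coming from the Leibniz rule and from $\nabla_X\Psi=L(X)+\lambda X\cdot\Psi$, will turn out to be pointwise linear in $L$ and $Q$, and can therefore be packaged into $L_2(X,Y)$.

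Concretely, the assumptions $\nabla_\nu X=\nabla_\nu Y=0$, $\nabla_\nu\langle X,Y\rangle=0$ and $\nabla_\nu\Psi=\lambda\nu\cdot\Psi$ give
\[
\nabla_\nu Q(X,Y)=(\nabla_\nu\mathcal{R}^Z)_{XY}\Psi+\lambda\bigl(\mathcal{R}^Z_{XY}+2\lambda^2(XY+\langle X,Y\rangle)\bigr)\nu\cdot\Psi,
\]
and the second Bianchi identity converts $(\nabla_\nu\mathcal{R}^Z)_{XY}$ into $(\nabla_X\mathcal{R}^Z)_{\nu Y}-(\nabla_Y\mathcal{R}^Z)_{\nu X}$. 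Expanding each term by Leibniz produces pieces involving $\nabla_X\nu=-A(X)$, the normal component of $\nabla_X Y$ (which contributes $\mathcal{R}^Z_{\nu\nu}\Psi=0$ by antisymmetry of $\mathcal{R}^Z$), and $\nabla_X\Psi$. Substituting Lemma~\ref{lemma:contraction} in the form $\mathcal{R}^Z_{\nu Y}\Psi=\nu\epsilon_j e_j Q(e_j,Y)-2\lambda^2\nu Y\cdot\Psi$ and differentiating the right-hand side along $X$, in a local orthonormal frame $\{e_j\}$ which we may take to be $\nabla$-parallel at the base point, produces precisely the leading term $\nu\epsilon_j e_j(\nabla_X Q)(e_j,Y)$, plus purely algebraic remainders.

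Symmetrizing under $X\leftrightarrow Y$, the non-leading contributions split into three families: curvature expressions of the form $\mathcal{R}^Z_{A(X),Y}\Psi$ and the tangential part of $\mathcal{R}^Z_{\nu,\nabla_X Y}\Psi$, both of which are rewritten as $Q$-linear terms via the definition of $Q$ and a further application of Lemma~\ref{lemma:contraction}; the combination $\lambda\bigl(\mathcal{R}^Z_{XY}+2\lambda^2(XY+\langle X,Y\rangle)\bigr)\nu\cdot\Psi$, which equals $\lambda Q(X,Y)\nu\cdot\Psi$ up to reordering Clifford factors and is manifestly $Q$-linear; and the $L$-linear piece $\mathcal{R}^Z_{\nu Y}L(X)$ together with its $X\leftrightarrow Y$ companion. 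These jointly define $L_2(X,Y)$ as a tensorial, antisymmetric, spinor-valued bilinear form in $X,Y$ that is pointwise linear in $L$ and $Q$. The main obstacle is the extensive bookkeeping: one must differentiate Clifford products such as $\nu Y\cdot\Psi$ with care, handle normal and tangential components of $\nabla_X Y$ separately, and verify that no spatial derivative of $L$ or $Q$ survives on the right-hand side; the automatic antisymmetry of both sides in $X\leftrightarrow Y$ serves as a useful sanity check.
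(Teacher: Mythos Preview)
Your approach is exactly the paper's: second Bianchi, Leibniz on both sides, Lemma~\ref{lemma:contraction}, and collect. One point deserves a word, though. When you write that $\lambda\bigl(\mathcal{R}^Z_{XY}+2\lambda^2(XY+\langle X,Y\rangle)\bigr)\nu\cdot\Psi$ is ``manifestly $Q$-linear up to reordering Clifford factors'', the reordering is not free: the spin curvature $\mathcal{R}^Z_{XY}$ is a degree-two Clifford element, and commuting it past $\nu$ produces the extra vector term $(R^{TZ}_{XY}\nu)\cdot\Psi$, which is \emph{not} linear in $Q$ or $L$ on its own. Likewise, your three-family bookkeeping omits the piece $\lambda\,\mathcal{R}^Z_{\nu Y}(X\cdot\Psi)$ coming from the $\lambda X\cdot\Psi$ half of $\nabla_X\Psi$; after commuting $X$ through, this leaves $\lambda(R^{TZ}_{\nu Y}X)\cdot\Psi$ in addition to a $Q$-linear term. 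The way these non-$Q$-linear residues disappear is via the \emph{first} Bianchi identity,
\[
R^{TZ}_{XY}\nu+R^{TZ}_{Y\nu}X+R^{TZ}_{\nu X}Y=0,
\]
applied to the three vector-valued curvature terms once you antisymmetrize in $X,Y$. The paper carries this out explicitly and arrives at $L_2(X,Y)=\lambda\,\nu\cdot Q(X,Y)+S(X,Y)-S(Y,X)$ with $S$ pointwise linear in $L,Q$. So your outline is sound, but the claim that everything left over is ``manifestly'' $Q$-linear hides a genuine cancellation that you should make explicit.
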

\begin{proof}
By definition of $P$ and Lemma \ref{lemma:contraction}, we have
\[
\begin{split}
    \nabla_X(\mathcal{R}_{\nu Y}\Psi)=&\nabla_X(P(Y)-2\lambda^2\nu\ Y\Psi)=\nabla_X(\nu \epsilon_j e_jQ(e_j,Y)-2\lambda^2\nu Y\Psi)\\
    =&\nu \epsilon_j e_j\big((\nabla_XQ)(e_j,Y)+Q(\nabla_Xe_j,Y)+Q(e_j,\nabla_XY)\big)\\
    &-A(X)\epsilon_je_jQ(e_j,Y)+\nu\epsilon_j(\nabla_Xe_j)Q(e_j,Y)\\
    &+2\lambda^2\big(A(X)Y\Psi-\nu\nabla_XY\Psi-\nu Y\nabla_X\Psi\big)\\
    =&\nu\epsilon_je_j(\nabla_XQ)(e_j,Y)+2\lambda^2\big(A(X)Y\Psi-\nu(\nabla_XY)\Psi-\lambda\nu YX\Psi\big)\\
    &+U(X,Y),
\end{split}
\]
where 
\begin{multline*}
    U(X,Y)=
\nu \epsilon_je_j\big(Q(\nabla_Xe_j,Y)+Q(e_j,\nabla_XY)\big)\\
    -A(X)\epsilon_je_jQ(e_j,Y)+\nu\epsilon_j(\nabla_Xe_j)Q(e_j,Y)
    -2\lambda^2\nu Y L(X)
\end{multline*}
depends linearly on $L$ and $Q$; notice that $U$ also depends on the connection form.

On the other hand,
\[
\begin{split}
    \nabla_X(\mathcal{R}_{\nu Y}\Psi)=&(\nabla_X\mathcal{R})_{\nu Y}\Psi-\mathcal{R}_{A(X)Y}\Psi+\mathcal{R}_{\nu\nabla_XY}\Psi+\mathcal{R}_{\nu Y}(L(X)+\lambda X\Psi)\\
    =&(\nabla_X\mathcal{R})_{\nu Y}\Psi-Q(A(X),Y)+2\lambda^2\big(A(X)Y+\langle A(X),Y\rangle\big)\Psi\\
    &+\nu\epsilon_j e_jQ(e_j,\nabla_XY)-2\lambda^2\nu(\nabla_XY)\Psi+\mathcal{R}_{\nu Y}L(X)\\
    &+\lambda\mathcal{R}_{\nu Y}X\cdot\Psi+\lambda X\big(\nu\epsilon_j e_jQ(e_j,Y)-2\lambda^2\nu Y\Psi)\\
    =&(\nabla_X\mathcal{R})_{\nu Y}\Psi+2\lambda^2\big(A(X)Y+\langle A(X),Y\rangle\big)\Psi-2\lambda^2\nu(\nabla_XY)\Psi\\
    &+\lambda\mathcal{R}_{\nu Y}X\cdot\Psi-2\lambda^3X\nu Y\Psi+V(X,Y),
\end{split}
\]
where
\[
V(X,Y)=-Q(A(X),Y)+\nu\epsilon_j e_jQ(e_j,\nabla_XY)+\mathcal{R}_{\nu Y}L(X)+\lambda X \nu\epsilon_j e_jQ(e_j,Y)
\]
depends linearly on $L$ and $Q$; notice that $V$ also depends on the connection form and the curvature.

Equating the terms and isolating $(\nabla_X\mathcal{R})_{\nu Y}\Psi$ we get
\[
\begin{split}
    (\nabla_X\mathcal{R})_{\nu Y}\Psi=&\nu\epsilon_j e_j(\nabla_XQ)(e_j,Y)+2\lambda^2\big(A(X)Y\Psi-\nu(\nabla_XY)\Psi-\lambda\nu YX\Psi\big)\\
    &-2\lambda^2\big(A(X)Y+\langle A(X),Y\rangle\big)\Psi+2\lambda^2\nu(\nabla_XY)\Psi-\lambda\mathcal{R}_{\nu Y}X\cdot\Psi\\
    &+2\lambda^3X\nu Y\Psi+U(X,Y)-V(X,Y)\\
    =&\nu\epsilon_j e_j(\nabla_XQ)(e_j,Y)-2\lambda^2\langle A(X),Y\rangle\Psi+4\lambda^3\nu\langle X,Y\rangle\Psi\\
    &-\lambda\mathcal{R}_{\nu Y}X\cdot\Psi+S(X,Y),
\end{split}
\]
where $S(X,Y)=U(X,Y)-V(X,Y)$ depends linearly on $L$ and $Q$.

By applying the second and first Bianchi identities we obtain
\[
\begin{split}
    \nabla_\nu Q(X&,Y)=(\nabla_\nu\mathcal{R})_{XY}\Psi+\lambda\mathcal{R}_{XY}\nu\cdot\Psi+\lambda\nu\mathcal{R}_{XY}\Psi+2\lambda^3\nu\big(XY+\langle X,Y\rangle\big)\Psi\\
    =&(\nabla_X\mathcal{R})_{\nu Y}\Psi
    -(\nabla_Y\mathcal{R})_{\nu X}\Psi
    +\lambda\mathcal{R}_{XY}\nu\cdot\Psi+\lambda\nu\mathcal{R}_{XY}\Psi\\
    &+2\lambda^3\nu\big(XY+\langle X,Y\rangle\big)\Psi\\
    =&\nu\epsilon_j e_j(\nabla_XQ)(e_j,Y)-2\lambda^2\langle A(X),Y\rangle\Psi+4\lambda^3\nu\langle X,Y\rangle\Psi-\lambda\mathcal{R}_{\nu Y}X\cdot\Psi\\
    -&\nu\epsilon_j e_j(\nabla_YQ)(e_j,X)+2\lambda^2\langle A(Y),X\rangle\Psi-4\lambda^3\nu\langle Y,X\rangle\Psi+\lambda\mathcal{R}_{\nu X}Y\cdot\Psi\\
    +&\lambda\mathcal{R}_{XY}\nu\cdot\Psi
    +\lambda\nu\mathcal{R}_{XY}\Psi+2\lambda^3\nu\big(XY+\langle X,Y\rangle\big)\Psi
    +S(X,Y)-S(Y,X)\\
    =&\nu\epsilon_j e_j\big((\nabla_XQ)(e_j,Y)-(\nabla_YQ)(e_j,X)\big)+L_2(X,Y),
\end{split}\]
where $L_2(X,Y)=\lambda \nu Q(X,Y)+S(X,Y)-S(Y,X)$.
\end{proof}
We are now able to prove the main theorem, which improves Proposition \ref{prop:extensionMetricPseudo}.
\begin{theorem}
\label{thm:mainpseudoriemannian}
Assume $(M,g)$ is a real analytic pseudo-Riemannian spin manifold of signature $(p,q)$ with a harmful structure $(\phi,\psi)$. Then:
\begin{itemize}
    \item if $(\phi,\psi)$ is real, $(M,g)$  embeds isometrically in a pseudo-Riemannian Einstein spin manifold $(Z,h)$ with signature $(p+1,q)$ and Weingarten operator $A$;
    \item if $(\phi,\psi)$ is imaginary, $(M,g)$  embeds isometrically in a pseudo-Riemannian Einstein spin manifold $(Z,h)$ with signature $(p,q+1)$ and Weingarten operator $A$.
\end{itemize} In both cases $\psi$ extends to a Killing spinor $\Psi$ on $Z$ satisfying $\nabla^Z_X\Psi=\lambda X\Psi$ for any $X\in TZ$.
\end{theorem}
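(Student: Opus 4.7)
The plan is to use Proposition~\ref{prop:extensionMetricPseudo} to produce the ambient Einstein manifold, extend the spinor to it by parallel transport along the normal direction with respect to the modified connection $\tilde\nabla$, and then show that the tangential derivative of the extended spinor with respect to $\tilde\nabla$ vanishes identically by a Cauchy--Kovalevskaya uniqueness argument applied to the coupled linear system for $L$ and $Q$ already derived in this section.

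More concretely, the first step is to invoke Proposition~\ref{prop:extensionMetricPseudo} to obtain the generalized cylinder $(Z,h)=(M\times(a,b),g_t\pm dt^2)$ with Weingarten operator $A_0=A$ along $M=M\times\{0\}$, of the correct signature. Next, extend $\Psi$ from $M$ to $Z$ by $\tilde\nabla$-parallel transport along the integral curves of $\nu$; this makes \eqref{eqn:killingnu} hold by construction, and the direct computation immediately preceding Lemma~\ref{lemma:conditionL} shows that $L(X)=\tilde\nabla_X\Psi$ vanishes identically on $M$ for every $X\in TM$.

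The next step is to verify that $Q(X,Y)=\mathcal{R}^Z_{XY}\Psi+2\lambda^2(XY+\langle X,Y\rangle)\Psi$ also vanishes on $M$ for $X,Y\in TM$. For this one combines the spin Gauss equation, which expresses $\mathcal{R}^Z_{XY}\Psi|_M$ in terms of $\mathcal{R}^M_{XY}\psi$ (or $\mathcal{R}^M_{XY}(\psi-\nu\phi)$ in the odd case) together with quadratic Weingarten terms $\tfrac12 A(X)A(Y)-\tfrac12 A(Y)A(X)$ and the tensor $F(X,Y)$, with the intrinsic curvature formula of Lemma~\ref{lemma:curvature}. After using $XY+YX=-2g(X,Y)$ to rewrite $2\lambda^2(XY+\langle X,Y\rangle)=-2\lambda^2(YX+g(X,Y))$, the $F$-terms and the $A$-bracket terms cancel exactly, leaving $Q|_M=0$. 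With these initial conditions secured, Lemma~\ref{lemma:conditionL}, Lemma~\ref{lemma:contraction} and Proposition~\ref{prop:conditionQ} together give a closed linear system of the form
\[
\nabla_\nu L=\mathcal{F}_1(L,Q),\qquad
\nabla_\nu Q=\mathcal{F}_2(L,Q,\nabla^M Q),
\]
where $\mathcal{F}_1,\mathcal{F}_2$ depend linearly on their arguments with coefficients that are real analytic in the coordinates on $Z$. This is a Cauchy problem in Kovalevskaya form with real analytic data and zero initial conditions on $M$, so by the Cauchy--Kovalevskaya theorem the unique solution is $L\equiv 0$, $Q\equiv 0$ on a neighbourhood of $M$ in $Z$. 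Consequently $\tilde\nabla\Psi=0$ everywhere, i.e.\ $\Psi$ is a Killing spinor with Killing number $\lambda$.

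The main obstacle I expect is the careful verification that $Q$ vanishes initially: this is where the harmful structure equations \eqref{eqn:systemSpace} enter the argument through Lemma~\ref{lemma:curvature}, combined with the spin Gauss equation in arbitrary signature, and it is also the step where the precise form of the ansatz $\Psi|_M=\psi$ (even case) or $\Psi|_M=\psi-\nu\phi$ (odd case) is crucial. The imaginary case $h(\nu,\nu)=-1$ is treated identically, replacing the isomorphism \eqref{eqn:cliffordtoclifford} by its timelike analogue, using Theorem~\ref{thm:restrictionTime} and the imaginary versions of Lemmas~\ref{lemma:curvature}--\ref{lemma:scalarCurvatureSpace}, and applying Corollary~\ref{cor:embeddingTime} in place of Corollary~\ref{cor:embeddingSpace}; no new analytical input is required.
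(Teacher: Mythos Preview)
Your overall strategy matches the paper's exactly: invoke Proposition~\ref{prop:extensionMetricPseudo}, extend $\Psi$ by $\tilde\nabla$-parallel transport along $\nu$, check that $L$ and $Q$ vanish on $M\times\{0\}$, and then apply Cauchy--Kovalevskaya to the linear system furnished by \eqref{eqn:nablanuL} and Proposition~\ref{prop:conditionQ}.

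The one place where you diverge from the paper is the verification that $Q|_M=0$. You propose to combine the spinorial Gauss--Codazzi relation with Lemma~\ref{lemma:curvature} and match terms. This can be made to work, but it is noticeably more involved than what the paper actually does: since you have already established $L|_M=0$, the spinor $\Psi$ satisfies $\nabla^Z_X\Psi=\lambda X\Psi$ along $M$ for every tangential $X$, and a direct three-line computation of $\mathcal{R}^Z_{XY}\Psi$ from this identity immediately gives $Q(X,Y)=0$, with no need to invoke the Gauss equation, Lemma~\ref{lemma:curvature}, or to distinguish the even and odd cases. Your route requires translating between the intrinsic Clifford product $\odot$ of Lemma~\ref{lemma:curvature} and the ambient product $\cdot$ appearing in $Q$, and in the odd case also needs the analogue of Lemma~\ref{lemma:curvature} for $\phi$; the paper's shortcut sidesteps all of this. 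Both arguments are correct, but the paper's is the cleaner choice and you should adopt it.
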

\begin{proof}
The isometric embedding follows from Proposition~\ref{prop:extensionMetricPseudo}; as explained at the beginning of this section, we can extend $\psi$ to a spinor $\Psi$ in such a way that \eqref{eqn:killingnu} holds. We only need to prove that $\Psi$ satisfies the Killing equation; this is equivalent to showing that $L(X)\equiv0$ on $Z$. We have already proved that $L(X)$ is zero on $M\times\{0\}$. To see that $Q$ vanishes on $M\times\{0\}$, let $X,Y$ be vector fields on $M$, and write
\begin{multline*}
Q(X,Y)=\nabla_X\nabla_Y \Psi -\nabla_Y\nabla_X \Psi -\nabla_{[X,Y]}\Psi + 2\lambda^2(XY+\langle X,Y\rangle)\Psi\\
=\nabla_X (\lambda Y\Psi)-\nabla_Y (\lambda \nabla_X \Psi)-\lambda [X,Y]\Psi  + 2\lambda^2(XY+\langle X,Y\rangle)\Psi\\
=\lambda(\nabla_X Y +\lambda YX - \nabla_Y X-\lambda XY -[X,Y]+2\lambda XY  + 2\lambda \langle X,Y\rangle)\Psi=0.
\end{multline*}
Using \eqref{eqn:nablanuL} and Proposition~\ref{prop:conditionQ} we see that $L$ and $Q$ satisfy the linear PDE system
\[
\begin{cases}
    (\nabla_\nu L)(X)=\lambda \nu L(X)+\nu \epsilon_j e_j Q(e_j,X)+L(A(X))\\
    (\nabla_\nu Q)(X,Y)=\nu\epsilon_j e_j\big((\nabla_XQ)(e_j,Y)-(\nabla_YQ)(e_j,X)\big)+L_2(L,Q).
\end{cases}
\]
By the Cauchy-Kowalewskaya Theorem we know that the solution to the PDE system is unique and, since $L=0=Q$ is a solution, it is the only one. In particular $L=0$ on $Z$ and $\Psi$ is a Killing spinor.
\end{proof}
Theorem~\ref{thm:mainpseudoriemannian} is not quite a generalization of the results of \cite{Ammann2013TheCP} for parallel spinors, in that it entails the extra hypothesis $d\tr A+\delta A=0$. However, if we restrict to the Riemannian case,  we can use Corollary \ref{cor:weakHarmfulRiemann} to remove this extra hypothesis:
\begin{corollary}
\label{cor:mainriemannian}
Assume $(M,g)$ is a real analytic Riemannian spin manifold with a real weakly harmful structure $(\phi,\psi)$. Then $(M,g)$  embeds isometrically in a Riemannian spin manifold $(Z,h)$ with Weingarten operator $A$, and $\psi$ extends to a Killing spinor $\Psi$ on $Z$ satisfying $\nabla^Z_X\Psi=\lambda X\Psi$ for any $X\in TZ$.
\end{corollary}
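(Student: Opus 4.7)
The plan is essentially to observe that this corollary reduces immediately to Theorem~\ref{thm:mainpseudoriemannian} once one upgrades the hypothesis from a weakly harmful structure to a harmful structure. The only discrepancy between the statement of Corollary~\ref{cor:mainriemannian} and Theorem~\ref{thm:mainpseudoriemannian} is that the former assumes only the equations \eqref{eqn:systemSpace} (together with a nowhere-vanishing pair of spinors), while the latter additionally requires $d\tr A+\delta A=0$. In the Riemannian setting, this extra condition comes for free.

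Concretely, I would first invoke Corollary~\ref{cor:weakHarmfulRiemann}, which says that on a Riemannian spin manifold every real weakly harmful structure already satisfies $d\tr A+\delta^g A=0$, hence is in fact a harmful structure. The proof of that corollary, in turn, uses Lemma~\ref{lemma:scalarCurvatureSpace} to rewrite the scalar curvature identity as $f\psi = X\cdot\psi$ with $f$ a scalar function and $X$ a vector field, and then exploits positive definiteness of $g$ (via $|X|^2\psi = -X\cdot X\cdot\psi$) to conclude both $f=0$ and $X=0$. So the only real analytic content I need from the Riemannian hypothesis is this positivity, which we are granted.

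Once the harmful structure is in hand, I would directly apply Theorem~\ref{thm:mainpseudoriemannian} with signature $(r,s)=(n,0)$, obtaining an isometric embedding into a Riemannian Einstein manifold $(Z,h)$ of signature $(n+1,0)$ with Weingarten operator $A$, together with the extension of $\psi$ to a Killing spinor $\Psi$ satisfying $\nabla^Z_X\Psi=\lambda X\cdot\Psi$. There is no real obstacle here: the work has all been done in the preceding sections, and the corollary is purely a bookkeeping statement recording that, in the Riemannian case, the codimension-one constraint $d\tr A+\delta A=0$ is redundant and can be omitted from the hypotheses without weakening the conclusion.
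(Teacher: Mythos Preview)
Your proposal is correct and matches the paper's own approach: invoke Corollary~\ref{cor:weakHarmfulRiemann} to upgrade the real weakly harmful structure to a harmful one in the Riemannian setting, then apply Theorem~\ref{thm:mainpseudoriemannian} with signature $(n,0)$. There is nothing to add; the paper states the corollary without a separate proof precisely because the preceding sentence explains this reduction.
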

\begin{example}
We now present an example in the invariant setting. By considering a Lie group with a left-invariant metric, we are able to work at the Lie algebra level by extending each object by left translation. Consider the Lie algebra 
\[
\mathfrak{g}=(-2e^{23}, 3e^{13}-3e^{34},-3e^{12}+3e^{24},2e^{23})
\] 
where the notation means that for some basis $\{e^1,\dotsc, e^4\}$ of $\mathfrak{g}^*$, the Chevalley-Eilenberg operator $d$ satisfies
\[
de^1=-2e^2\wedge e^3,\,de^2=3e^1\wedge e^3-3e^3\wedge e^4,\,de^3=-3e^1\wedge e^2+3e^2\wedge e^4,\,de^4=2e^2\wedge e^3.
\]
Consider a Lie group with Lie algebra $\mathfrak{g}$, and extend the coframe $e^1,\dotsc, e^4$ by left-invariance, so that $d$ becomes the usual exterior derivative, and consider the metric
\[
g=e^1\otimes e^1+e^2\otimes e^2+e^3\otimes e^3-e^4\otimes e^4.
\]
By Theorem 4.3 of \cite{LawsonJr.1989SpinGeometry} we know that $\Cl_{3,1}=M(2,\h)$. Since we ultimately want to extend a spinor to a $5$-dimensional manifold, we also need to consider $\h^2$ as a $\Cl(4,1)$-module; writing out quaternionic matrices in complex terms, we obtain an orthonormal basis of $\R^{4,1}$ given by
\begin{gather*}
\tilde E_1=\begin{pmatrix} i & 0 &0&0\\ 0 & - i&0&0\\0&0&-i&0\\0&0&0&i\end{pmatrix},
\tilde E_2=\begin{pmatrix} 0 & -1&0&0 \\ 1 & 0&0&0\\0&0&0&1\\0&0&-1&0\end{pmatrix}, \tilde E_3=\begin{pmatrix}0 &0& 1 &0\\ 0&0&0&1\\-1 & 0&0&0\\0&-1&0&0 \end{pmatrix},\\
\tilde E_4=\begin{pmatrix}0&0 & 1&0 \\0&0&0&1\\1&0&0&0\\0&1&0&0 \end{pmatrix},\tilde E_5 =  \begin{pmatrix} 0&-i&0&0\\-i&0&0&0\\0&0&0&i\\0&0&i&0\end{pmatrix},
\end{gather*}
where the time-like vector is $\tilde E_4$. The inclusion $\Cl_{3,1}\to\Cl_{4,1}$ of \eqref{eqn:cliffordtoclifford} is then realized by $e_i\mapsto E_i=\tilde E_5\tilde E_i$.

As we need the covariant derivative we write the connection form
\[
\left(\begin{array}{cccc}0&- e^3&e^2&0\\e^3&0&-2 e^1-2 e^4&e^3\\- e^2&2 e^1+2 e^4&0&- e^2\\0&e^3&- e^2&0\end{array}\right).
\]
Consider the spinors $\psi=(i,1,i,1)$ and $\phi=i^{\frac{2-3q-p}{2}}\omega\psi=(-i,1,-i,1)$, and the endomorphism $A=e^1\otimes(2e_1-e_4)+e^2\otimes e_2+e^3\otimes e_3+e^4\otimes e_1$. We will prove that $(\phi,\psi)$ is a harmful structure, that is, they satisfy the system \eqref{eqn:systemSpace} with $\lambda=i/2$ and $d\tr A+\delta A=0$, and show that the connected, simply-connected Lie group with Lie algebra $\mathfrak{g}$ extends to a 5-dimensional Einstein manifold endowed with a Killing spinor.

The correspondence $e_i\longleftrightarrow E_i$ gives
\[
\nabla^M\psi=\frac12\Big[2e^1\otimes E_2E_3-e^2\otimes(E_1E_3+E_3E_4)+e^3\otimes(E_1E_2+E_2E_4)+2e^4\otimes E_2E_3\Big]\psi,
\]
whilst
\[
A\psi=\Big(e^1\otimes \big(2E_1-E_4\big)+e^2\otimes E_2+e^3\otimes E_3+e^4\otimes E_1\Big)\psi.
\]
We need to verify that $\nabla^M\psi=\frac12(A\psi+i\phi)$, which, noticing that
\[
(E_1-E_2E_3)\psi=(E_2-E_3E_1)\psi=(E_3-E_1E_2)\psi=0,
\]
is equivalent to the system 
\[
E_4\psi-iE_1\phi=iE_2\phi-E_3E_4\psi=E_2E_4\psi-iE_3\phi=iE_4\phi-E_2E_3\psi=0.
\]
It is easy to see that these are satisfied, hence $\nabla_X\psi=\frac12\big(A(X)\psi+iX\phi\big)$ for any $X\in\mathfrak{g}$. Since $p+q$ is even, the second equation in \eqref{eqn:systemSpace} is automatic, i.e. both equations reduce to \eqref{eqn:morel}. 

Now consider the derivation
\[D=2e^1\otimes (e_1-e_4)+e^2\otimes e_2+e^3\otimes e_3.\]
Its symmetric part coincides with $A$; it follows that the semidirect product $\tilde{\mathfrak g}=\mathfrak g\rtimes_D\Span\{e_5\}$ satisfies the equations of Theorem~\ref{thm:generalKoiso}. Explicitly, one can write
\[\tilde{\mathfrak g}= (2e^{15}-2e^{23},e^{25}+ 3e^{13}-3e^{34},e^{35}-3e^{12}+3e^{24},-2e^{15}+2e^{23},0),
\]
and verify  that the metric
\[\tilde g=e^1\otimes e^1+e^2\otimes e^2+e^3\otimes e^3-e^4\otimes e^4+e^5\otimes e^5\]
is Einstein with $\Ric=-4\id$ and the spinor $\Psi=(i,1,i,1)$ is Killing with Killing number $i/2$. In fact, this is a Lorentz-Einstein-Sasaki metric; if one reverses the sign of the metric along the Reeb vector field $e_4$,  one obtains the known $\eta$-Einstein-Sasaki metric on the Lie algebra $D_{22}$ in the classification of \cite{DIATTA2008544} (see also \cite{article,Andrada2009A5-manifolds}).
\end{example}

\medskip
\textbf{Acknowledgments}
This paper originated as part of the PHD thesis of the second author, written under the supervision of the first author, for the joint PHD programme in Mathematics Università di Milano Bicocca - University of Surrey; the authors acknowledge a partial support from the PRIN 2022MWPMAB project ``Interactions between Geometric Structures and Function Theories'' and GNSAGA of INdAM. The first author also acknowledges the MIUR Excellence Department Project awarded to the Department of Mathematics, University of Pisa, CUP I57G22000700001.

The authors are grateful to Federico A.~Rossi and the anonymous referee for many useful comments and corrections that improved the presentation of the paper.

\bibliographystyle{acm}
\bibliography{references}

\medskip
\small\noindent D. Conti: Dipartimento di Matematica, Università di Pisa, largo Bruno Pontecorvo 6, 56127 Pisa, Italy.\\
\texttt{diego.conti@unipi.it}\\
\small\noindent R.~Segnan Dalmasso: Dipartimento di Matematica e Applicazioni, Universit\`a di Milano Bicocca, via Cozzi 55, 20125 Milano, Italy.\\
\texttt{romeo.segnandalmasso@unimib.it}

\end{document}